\theoremstyle{plain}
\newtheorem{theorem}{Theorem}[section]
\theoremstyle{remark}
\newtheorem{remark}[theorem]{Remark}
\newtheorem{facts}[theorem]{Facts}
\theoremstyle{plain}
\newtheorem{corollary}[theorem]{Corollary}
\newtheorem{lemma}[theorem]{Lemma}
\newtheorem{proposition}[theorem]{Proposition}
\newtheorem{definition}[theorem]{Definition}
\numberwithin{equation}{section}
\def\R{{\mathbb R}}
\newcommand{\E}{{\mathbb E}}
\renewcommand{\P}{{\mathbb P}}
\newcommand{\F}{{\mathcal F}}
\newcommand{\g}{\gamma}
\newcommand{\eps}{\varepsilon}
\renewcommand{\O}{\Omega}
\newcommand{\calL}{{\mathcal L}}
\newcommand{\n}{\Vert}
\newcommand{\lb}{\langle}
\newcommand{\rb}{\rangle}
\begin{document}

\author{Stanislaw Kwapie{\'n}}
\address{Institute of Mathematics
\\ Warsaw University
\\ Banacha 2
\\ 02-097 Warszawa
\\ Poland
}
\email{KwapStan@mimuw.edu.pl}

\author{Mark Veraar}
\address{Delft Institute of Applied Mathematics\\
Delft University of Technology \\ P.O. Box 5031\\ 2600 GA Delft\\The
Netherlands} \email{M.C.Veraar@tudelft.nl}

\author{Lutz Weis}
\address{Institut f\"ur Analysis \\
Karlsruhe Institute of Technology\\
D-76128  Karls\-ruhe\\Germany}
\email{lutz.weis@kit.edu}

\thanks{The first named author is supported by NCN grant Dec-2012/05/B/ST1/00412. The second author
is supported by the VIDI subsidy 639.032.427 of the Netherlands Organisation for Scientific Research (NWO).
The third named author is supported by a grant from the Graduierten Kolleg 1294DFG.}

\date\today

\title
[$R$-boundedness versus $\g$-boundedness]{$R$-boundedness versus $\g$-boundedness}

\begin{abstract}
It is well-known that in Banach spaces with finite cotype, the $R$-bounded and $\g$-bounded families of operators coincide. If in addition $X$ is a Banach lattice, then these notions can be expressed as square function estimates. It is also clear that $R$-boundedness implies $\g$-boundedness. In this note we show that all other possible inclusions fail. Furthermore, we will prove that $R$-boundedness is stable under taking adjoints if and only if the underlying space is $K$-convex.
\end{abstract}

\keywords{$R$-boundedness, $\gamma$-boundedness, $\ell^2$-boundedness, type and cotype, summing operators, $K$-convexity}

\subjclass[2010]{Primary: 47B99; Secondary: 46B09, 46B07, 47B10}

\maketitle

\section{Introduction}
Square function estimates of the form
\begin{equation}\label{eq:MarcZyg}
\Big\|\Big(\sum_{n=1}^N |T_n x_n|^2\Big)^{1/2}\Big\|_{L^q} \leq C \Big\|\Big(\sum_{n=1}^N |x_n|^2\Big)^{1/2}\Big\|_{L^p}
\end{equation}
for operators $T_1, \ldots, T_N:L^p(\R^d)\to L^q(\R^d)$ and $x_1, \ldots, x_N\in L^p(\R^d)$ with $1<p,q<\infty$, play an important role in harmonic analysis, in particular in Calderon-Zygmund and martingale theory. In 1939 Marcinkiewicz and Zygmund \cite{MarcZyg} (building on previous work of Paley \cite{Paley}, see also \cite{GarciaRubio}) proved \eqref{eq:MarcZyg} for a single linear operator $T = T_1 = \ldots = T_N:L^p\to L^q$ by expressing the square functions in terms of random series, i.e.
\begin{equation}\label{eq:randomseries}
\Big\|\Big(\sum_{n=1}^N |x_n|^2\Big)^{1/2}\Big\|_{L^p} \eqsim_p \E\Big\|\sum_{n=1}^N r_n x_n \Big\|_{L^p} \eqsim_p \E\Big\|\sum_{n=1}^N\gamma_n x_n \Big\|_{L^p},
\end{equation}
where $(\gamma_n)_{n\geq 1}$ are independent standard Gaussian random variables and $(r_n)_{n\geq 1}$ are independent Rademacher random variables. Such random series with values in a Banach space have become a central tool in the geometry of Banach spaces and probability theory in Banach spaces (see \cite{AlKa}, \cite{KwWo}, \cite{LeTa} and \cite{MaPi}).

Random series also allow to extend \eqref{eq:MarcZyg} to general Banach spaces
and have become an effective tool to extend many central results about Fourier multipliers, Calderon--Zygmund operators, stochastic integrals and the holomorphic functional calculus to Banach space valued functions and ``integral operators'' with operator-valued kernels
(e.g.\ see \cite{ArBu}, \cite{Bou2}, \cite{CPSW}, \cite{DHP}, \cite{HytMih}, \cite{JMX}, \cite{KaWe}, \cite{KuWe}, \cite{NVW11eq} and \cite{Weis-maxreg}). In recent years it was observed that many of the classical results extend to the operator-valued setting as long as all uniform boundedness assumptions are replaced by $R$-boundedness or $\gamma$-boundedness assumptions (see the next section for the precise definition).
In many of these results it is crucial that the Banach space $X$ has finite cotype and in this case the second part of \eqref{eq:randomseries} remains valid: (see \cite[Lemma 4.5 and Proposition 9.14]{LeTa})
\begin{equation*}
\E\Big\|\sum_{n=1}^N r_n x_n \Big\|_{X} \eqsim_X \E\Big\|\sum_{n=1}^N\gamma_n x_n \Big\|_{X}.
\end{equation*}
For this reason $R$-boundedness and $\g$-boundedness are equivalent under finite cotype assumptions. Furthermore, it is well-known that $R$-boundedness always implies $\g$-boundedness. It was an open problem whether these two notions are the same for all Banach spaces.

By constructing an example in $\ell^\infty_n$'s and combining this with methods from the geometry of Banach spaces we prove the following result:
\begin{theorem}\label{thm:main}
Let $X$ and $Y$ be nonzero Banach spaces. The following assertions are equivalent:
\begin{enumerate}[(i)]
\item Every $\g$-bounded family $\mathscr{T}\subseteq \calL(X,Y)$ is $R$-bounded.
\item $X$ has finite cotype.
\end{enumerate}
In this case $\mathcal{R}(\mathscr{T}) \lesssim_X \mathcal{R}^\gamma(\mathscr{T}) \leq \mathcal{R}(\mathscr{T})$.
\end{theorem}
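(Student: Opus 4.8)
The plan is to prove the two-sided estimate first---this simultaneously yields the implication (ii)$\Rightarrow$(i)---and then to establish $\neg$(ii)$\Rightarrow\neg$(i) by exhibiting a single $\gamma$-bounded family that fails to be $R$-bounded. The inequality $\mathcal{R}^\gamma(\mathscr{T})\le\mathcal{R}(\mathscr{T})$ holds in every Banach space, and I would obtain it by conditioning on the moduli of the Gaussians: since $(\gamma_n)$ has the same distribution as $(r_n|\gamma_n|)$ with the signs $(r_n)$ independent of $(|\gamma_n|)$, one has $\E\|\sum_n\gamma_nT_nx_n\|_Y=\E_{|\gamma|}\E_r\|\sum_n r_n|\gamma_n|T_nx_n\|_Y\le\mathcal{R}(\mathscr{T})\,\E_{|\gamma|}\E_r\|\sum_n r_n|\gamma_n|x_n\|_X=\mathcal{R}(\mathscr{T})\,\E\|\sum_n\gamma_nx_n\|_X$, where the middle step applies $R$-boundedness to the vectors $|\gamma_n|x_n$ for each fixed realisation of the moduli.

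For the reverse inequality I would chain the two standard comparisons of random sums. In the target space $Y$ Rademacher sums are always dominated by Gaussian sums, so $\E\|\sum_n r_nT_nx_n\|_Y\lesssim\E\|\sum_n\gamma_nT_nx_n\|_Y\le\mathcal{R}^\gamma(\mathscr{T})\,\E\|\sum_n\gamma_nx_n\|_X$. In the domain $X$ finite cotype yields the converse comparison $\E\|\sum_n\gamma_nx_n\|_X\lesssim_X\E\|\sum_n r_nx_n\|_X$---this is exactly the equivalence of Gaussian and Rademacher sums quoted above from \cite{LeTa}. Combining the two gives $\mathcal{R}(\mathscr{T})\lesssim_X\mathcal{R}^\gamma(\mathscr{T})$, which proves the displayed estimate and the implication (ii)$\Rightarrow$(i).

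For $\neg$(ii)$\Rightarrow\neg$(i), suppose $X$ has no finite cotype. By the Maurey--Pisier theorem $X$ then contains the spaces $\ell^\infty_n$ uniformly, so it suffices to build, on $\ell^\infty_n$, a family whose $\gamma$-bound is $O(1)$ while its $R$-bound diverges, and to transport it to $X$. The mechanism is the logarithmic gap $\E\|\sum_{k=1}^n r_ke_k\|_\infty=1$ versus $\E\|\sum_{k=1}^n\gamma_ke_k\|_\infty\asymp\sqrt{\log n}$. Fixing $0\ne y_0\in Y$, I would take the normalised coordinate functionals $T_k=\sqrt{(\log n)/n}\,\langle\cdot,e_k\rangle\,y_0\in\calL(\ell^\infty_n,Y)$: tested on $x_k=e_k$ they give $\mathcal{R}(\mathscr{T}_n)\gtrsim\sqrt{\log n}\to\infty$, whereas the normalisation is designed to keep $\mathcal{R}^\gamma(\mathscr{T}_n)$ bounded. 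One then transports each family through the uniform isomorphisms $\ell^\infty_n\hookrightarrow X$ (using that $\ell^\infty_n$ is injective, hence uniformly complemented in $X$, so the functionals extend with controlled norm) and assembles the copies for $n=2^m$ on disjoint complemented blocks into one family $\mathscr{T}\subseteq\calL(X,Y)$.

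The main obstacle is the uniform $\gamma$-boundedness of $\mathscr{T}_n$. Unlike the single test witnessing the $R$-lower bound, $\gamma$-boundedness must be verified against all finite selections and all vectors, which for the coordinate functionals reduces to the Gaussian maximal estimate $\sum_i\|x_i\|_\infty^2\lesssim\frac{n}{\log n}\,\E\|\sum_i\gamma_ix_i\|_\infty^2$ in $\ell^\infty_n$; the naive bound through the Hilbert--Schmidt norm is too lossy (it fails already for matrices with equal rows), so a genuine Gaussian comparison/chaining argument is needed here. A secondary point is the assembly: an infinite union of uniformly $\gamma$-bounded families need not be $\gamma$-bounded, so the blocks must be chosen with disjoint supports in order that the $\gamma$-bounds combine to their supremum rather than to a divergent sum, while the $R$-bounds still diverge.
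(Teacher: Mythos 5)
Your positive direction matches the paper exactly: $\mathcal{R}^\gamma(\mathscr{T})\leq \mathcal{R}(\mathscr{T})$ by the randomization $(\gamma_n)\sim(r_n|\gamma_n|)$, and $\mathcal{R}(\mathscr{T})\lesssim_X \mathcal{R}^\gamma(\mathscr{T})$ by chaining the universal domination of Rademacher by Gaussian sums in $Y$ with the finite-cotype comparison in $X$; likewise your scaled coordinate functionals on $\ell^\infty_n$ and the transport into $X$ via injectivity of $\ell^\infty_n$ (Hahn--Banach extension, as in the paper's Corollary \ref{cor:MP1}) are the paper's construction. But there is a genuine gap at exactly the point you flag and then leave open: the uniform $\gamma$-boundedness of $\mathscr{T}_n$. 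First, the estimate you reduce to, $\sum_i\|x_i\|_\infty^2\lesssim \frac{n}{\log n}\,\E\|\sum_i\gamma_i x_i\|_\infty^2$, is the entire analytic content of the hard implication, and you do not prove it; the paper does so via a Sudakov-type lower bound $\big(\frac{\log n}{n}\sum_i x_i^2\big)^{1/2}\leq 4\,\E\sup_i|\gamma_i x_i|$ (Lemma \ref{lem:Sudakov}, proved with Komatsu's Gaussian tail estimate and convexity of $\Theta(y)=ye^{-1/(2y)}$). Second, and more subtly, your reduction is only valid for selections of \emph{distinct} functionals. For $R$-boundedness one may indeed assume the operators distinct, but that reduction is not known for $\gamma$-boundedness (Remark \ref{rem:equal}); the paper's Lemma \ref{lem:Gaussbound} must therefore handle repetitions directly, grouping the indices $A_n=\{j:S_j=T_n\}$, setting $a_n=\big(\sum_{j\in A_n}|T_n x_j|^2\big)^{1/2}$, and comparing $\E\sup_n|\gamma_n a_n|^2$ with $\E\|\sum_j\gamma_j x_j\|^2$ through the diagonal of $\Gamma_n=\sum_{j\in A_n}\gamma_j x_j$, an auxiliary Rademacher randomization, a sign-flip isometry of $c_0$, and Jensen's inequality. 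Saying ``a genuine Gaussian comparison/chaining argument is needed here'' names the hole without filling it.

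Your assembly step would also fail as stated. In a general Banach space without finite cotype you get uniform copies of $\ell^\infty_n$, but no disjointly supported complemented blocks; and even granting such structure, disjointness does not make the $\gamma$-bounds ``combine to their supremum'': since your operators have one-dimensional range, a selection drawing from $M$ disjoint blocks aggregates on the left in $\ell^2$ (growth of order $\sqrt{M}$), while the Gaussian norm of a disjointly supported sum on the right is a maximum over blocks (growth at most of order $\sqrt{\log M}$). Concretely, running your block construction on $M$ disjoint copies of $\ell^\infty_n$ gives a family whose $\gamma$-bound is at least of order $\big(M\log n/\log(Mn)\big)^{1/2}\to\infty$ as $M\to\infty$, so the disjoint union of uniformly $\gamma$-bounded families need not be $\gamma$-bounded. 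The correct device is the paper's Lemma \ref{lem:uniformbdd}: $R$-bounds and $\gamma$-bounds are homogeneous, monotone, and countably subadditive under unions, so taking $A=\bigcup_n 2^{-n}A_n$ with $\mathcal{R}^\gamma(A_n)\leq 1$ and $\mathcal{R}(A_n)\geq 4^n$ yields a single family with $\mathcal{R}^\gamma(A)\leq 1$ and $\mathcal{R}(A)=\infty$, which is the contradiction the argument needs.
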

In Section \ref{sec:square} we will also discuss the connections between $R$-boundedness and $\g$-boundedness and
$\ell^2$-boundedness (as defined in \eqref{eq:MarcZyg} and Section \ref{sec:square}) for general lattices.
We show that $\ell^2$-boundedness implies $R$-boundedness if and only if the codomain $Y$ has finite cotype. Furthermore, $R$-boundedness implies $\ell^2$-boundedness if and only if the domain $X$ has finite cotype.
The proofs are based on connections with classical notions such as $p$-summing operators and operators of cotype $q$.
These connections and the deep result of Montgomery-Smith and Talagrand, on cotype of operators from $C(K)$,
(which are summarized in Talagrand's recent monograph \cite{Talagrnew}, chapter
16) allow to obtain as quick consequences proofs of Theorem \ref{thm:main} and
Theorem \ref{thm:Lmain1}.
Since the results of Montgomery-Smith and Talagrand are quite involved
and we need for the proof of Theorem \ref{thm:main} a simple case we decided to
give in Section 3 an elementary and a concise proof of Theorem 1.1
which did not refer to the results on the cotype of operators.
However we have to underline that the ideas behind this proof are the
same as in the proof of \cite[Theorem 5.3, page 33]{Montgomthesis}.

In Section \ref{sec:dual} we will characterize when $R$-boundedness and $\gamma$-boundedness are stable under taking adjoints. It is well-known that the notion of $K$-convexity is a sufficient condition for this. We will prove that it is also necessary. Surprisingly the proof of this result is based on similar techniques as in Section \ref{sec:square}.

\medskip
\noindent \textbf{Acknowledgement:} The authors thank the anonymous referee for helpful comments.

\section{Preliminaries}

Let $(r_n)_{n\geq 1}$ be a Rademacher sequence defined on a probability space $(\O_r, \F_r, \P_r)$, i.e. $\P(r_1 = 1) = \P(r_1 = -1) =1/2$ and $(r_n)_{n\geq 1}$ are independent and identically distributed. Let $(\g_n)_{n\geq 1}$ be a Gaussian sequence defined on a probability space $(\O_\g, \F_\g, \P_\g)$, i.e. $(\g_n)_{n\geq 1}$ are independent standard Gaussian random variables. Expectation with respect to the Rademacher sequence and Gaussian sequence are denoted by $\E_r$ and $\E_\g$ respectively. The expectation on the product space will be denoted by $\E$.

For Banach spaces $X$ and $Y$, the bounded linear operators from $X$ to $Y$ will be denoted by $\calL(X, Y)$.

\begin{definition}
Let $X$ and $Y$ be Banach spaces. Let $\mathscr{T}\subseteq \calL(X,Y)$
\begin{enumerate}[(i)]
\item The set of operators $\mathscr{T}$ is called {\em $\g$-bounded} if there exists a constant $C\ge 0$ such that for all $N\geq 1$, for all $(x_n)_{n=1}^N$ in $X$ and
$(T_n)_{n=1}^N$ in ${\mathscr {T}}$ we have
 \begin{equation}\label{eq:rbdd}
 \Big(\E \Big\n \sum_{n=1}^N \g_n T_n x_n\Big\n^2\Big)^{1/2} \le C\Big(\E \Big\n \sum_{n=1}^N \g_n x_n\Big\n^2\Big)^{1/2}.
\end{equation}
The least admissible constant $C$ is called the {\em $\g$-bound} of
$\mathscr {T}$, notation $\mathcal{R}^\g(\mathscr{T})$.
\item If the above holds with $(\g_n)_{n\geq 1}$ replaced by $(r_n)_{n\geq 1}$, then
$\mathscr{T}$ is called {\em $R$-bounded}. The {\em $R$-bound} of $\mathscr{T}$ will be denoted by
$\mathcal{R}(\mathscr{T})$.
\item If $\mathscr{T}$ is uniformly bounded we write $\mathcal{U}(\mathscr{T}) = \sup_{T\in \mathscr{T}} \|T\|$.
\end{enumerate}
\end{definition}
We refer to \cite{CPSW} and \cite{KuWe} for a detailed discussion on $R$-boundedness. Let us note that by the Kahane-Khincthine inequalities (see \cite[Theorem 4.7]{LeTa}) the second moments may be replaced by any $p$-th moment with $p\in (0,\infty)$.

\begin{remark}\label{rem:equal}
Some of the operators $T_n$ in \eqref{eq:rbdd} could be identical. This sometimes leads to difficulties. However, for $R$-boundedness a randomization argument shows that it suffices to consider distinct operators $T_1, \ldots, T_N\in \mathscr{T}$ (see \cite[Lemma 3.3]{CPSW}). Unfortunately, such a result is not known for $\g$-boundedness.
\end{remark}

An obvious fact which we will use below is the following: Let $\mathscr{T}\subseteq \calL(X,Y)$ be $R$-bounded. If $U:E\to X$ and $V:Y\to Z$ are bounded operators, then
\begin{equation}\label{eq:obvious}
\mathcal{R}(\{VTU: T\in \mathscr{T}\} \leq \|V\| \mathcal{R}(\mathscr{T}) \|U\|.
\end{equation}
The same holds for $\g$-boundedness.

For details on type and cotype, we refer to \cite[Chapter 11]{DJT} and \cite{LeTa}. For type and cotype of operators we refer to \cite{PieWen} and \cite{Talagrnew} and references therein.

Let $q\in [2, \infty]$.
An operator $T\in \calL(X,Y)$ is said to be of {\em Rademacher cotype $q$} if there is a constant $C$ such that for all $N\geq 1$, and $x_1, \ldots, x_N\in X$ one has
\begin{equation*}
\Big(\sum_{n=1}^N \|T x_n\|^q\Big)^{1/q}\leq C \Big\|\sum_{n=1}^N r_n x_n\Big\|_{L^q(\Omega;X)}
\end{equation*}
The infimum of all constants $C$ is denoted by $C_q(T)$. Replacing $(r_n)_{n\geq 1}$ by $(\gamma_n)_{n\geq 1}$ one obtains the definition of Gaussian cotype $q$ of $T$ and the optimal constant in this case is denoted by $C_q^{\gamma}(T)$. It is well-known that this notion is different in general (see Remark \ref{rem:montgom}). In the case $X = Y$ and $T$ is the identity, one obtains the notions of Rademacher and Gaussian cotype $q$ of $X$, and these notions are known to be equivalent (see \cite{DJT} and \cite{LeTa}).

Let $p\in [1, 2]$. An operator $T\in \calL(X,Y)$ is said to be of {\em Rademacher type $p$} if there is a constant $\tau$ such that for all $N\geq 1$, and $x_1, \ldots, x_N\in X$ one has
\begin{equation*}
\Big\|\sum_{n=1}^N r_n T x_n\Big\|_{L^p(\Omega;Y)} \leq \tau \Big(\sum_{n=1}^N \|x_n\|^p\Big)^{1/p}
\end{equation*}
The infimum of all constants $\tau$ is denoted by $\tau_p(T)$. Replacing $(r_n)_{n\geq 1}$ by $(\gamma_n)_{n\geq 1}$ one obtains the definition of Gaussian type $p$ of $T$ and the optimal constant in this case is denoted by $\tau_q^{\gamma}(T)$. By an easy randomization argument and \cite[Lemma 4.5]{LeTa} these notions can be seen to be equivalent. In the case $X = Y$ and $T$ is the identity, one obtains the notions of Rademacher and Gaussian type $p$ of $X$. We say that $X$ has {\em nontrivial type} if there exists a $p\in (1, 2]$ such that $X$ has type $p$.

The Maurey--Pisier theorem \cite[Theorem 1.1]{MaPi} gives a way to check whether a given Banach space $X$ has finite cotype. In order to state this result recall that for $p\in [1, \infty]$ and $\lambda>1$, $X$ contains $\ell^p_n$'s $\lambda$-uniformly if for every $n\geq 1$, there exists a mapping $J_{n}:\ell^p_n\to X$ such that
\[\lambda^{-1}\|x\|\leq \|J_n x\|\leq \|x\|, \ \ \ x\in \ell^p_n.\]

\begin{theorem}\label{thm:MP1}
For a Banach space $X$ the following are equivalent:
\begin{enumerate}[(i)]
\item $X$ does not have finite cotype.
\item $X$ contains $\ell^\infty_n$'s $\lambda$-uniformly for some (for all) $\lambda>1$.
\end{enumerate}
\end{theorem}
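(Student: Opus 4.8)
The plan is to prove the two implications separately: $(ii)\Rightarrow(i)$ is a short estimate on cotype constants, while $(i)\Rightarrow(ii)$ is the substantial implication and coincides with the Maurey--Pisier extraction of $\ell^\infty_n$'s. For $(ii)\Rightarrow(i)$ I would argue by forcing the cotype constants to blow up. Suppose $X$ contains $\ell^\infty_n$'s $\l$-uniformly, fix $q\in[2,\infty)$, and let $J_n:\ell^\infty_n\to X$ be the given maps. Putting $x_k=J_n e_k$ for the unit vector basis $(e_k)_{k=1}^n$ of $\ell^\infty_n$ gives $\|x_k\|\ge\l^{-1}$, so $(\sum_{k=1}^n\|x_k\|^q)^{1/q}\ge\l^{-1}n^{1/q}$. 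On the other hand $\|\sum_{k=1}^n r_k e_k\|_{\ell^\infty_n}=1$ for every realisation of the signs, hence $\|\sum_{k=1}^n r_k x_k\|\le 1$ pointwise and $\|\sum_{k=1}^n r_k x_k\|_{L^q(\O;X)}\le 1$. Inserting these two bounds into the cotype-$q$ inequality forces every admissible constant $C$ to satisfy $C\ge\l^{-1}n^{1/q}\to\infty$. Thus $X$ has cotype $q$ for no finite $q$, i.e.\ $X$ has no finite cotype.

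For $(i)\Rightarrow(ii)$ I would first record the standard reformulation of $(ii)$: $X$ contains $\ell^\infty_n$'s $\l$-uniformly for every $\l>1$ precisely when, for every $n$ and every $\eps>0$, there are $x_1,\dots,x_n\in X$ with $\max_k|a_k|\le\|\sum_k a_k x_k\|\le(1+\eps)\max_k|a_k|$ for all scalars $(a_k)$. To locate the obstruction I would introduce the cotype profile $\alpha_n(X)=\inf\{(\E\|\sum_{k=1}^n r_k x_k\|^2)^{1/2}:\|x_k\|=1,\ 1\le k\le n\}$, which satisfies $\alpha_n(X)\ge 1$ (by convexity in a single sign), and by the Kahane--Khintchine inequalities recalled above the exponent $2$ is immaterial. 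Testing the cotype-$q$ inequality on unit vectors gives $n^{1/q}\lesssim C\,\alpha_n(X)$; conversely, by the standard reduction that it suffices to verify the cotype inequality on families of equal norm, a bound $\alpha_n(X)\gtrsim n^{1/q}$ already yields cotype $q$. Hence $X$ has finite cotype if and only if $\liminf_{n}\frac{\log\alpha_n(X)}{\log n}>0$, so the hypothesis $(i)$ is equivalent to $\liminf_{n}\frac{\log\alpha_n(X)}{\log n}=0$. Consequently, along a subsequence $n_k\to\infty$ one finds $n_k$ unit vectors whose Rademacher average is only $n_k^{o(1)}$, the quantitative signature of an $\ell^\infty_{n_k}$-pattern.

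The remaining step -- upgrading these ``flat'' systems into honest $(1+\eps)$-isometric copies of $\ell^\infty_n$ -- is the main obstacle, and is the heart of the Maurey--Pisier theorem. It cannot be carried out by a soft averaging (Fekete-type) argument, since $\alpha_n$ is not sub-multiplicative in a general Banach space, so the sub-polynomial growth does not by itself regularise into a power law inside $X$. I would instead follow their route: pass to a spreading model (or an ultrapower) built from a suitable flat system, where the normalised averages become stable, and then invoke Krivine's theorem on the block finite representability of some $\ell^p$; the sub-polynomial growth of the averages forces $p=\infty$, producing an almost isometric $\ell^\infty_n$ in a block subspace, which transfers back to $X$. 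In a self-contained account one would either cite \cite{MaPi} for this implication or reproduce the special case needed here; the surrounding bookkeeping -- the equal-norm reduction, the passage between the $\alpha_n$-formulation and the embeddings $J_n$, and the extraction of subsequences -- is routine.
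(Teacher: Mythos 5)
The first thing to say is that the paper contains no proof of this statement at all: Theorem~\ref{thm:MP1} is quoted as known background, with the argument delegated to \cite[Theorem 1.1]{MaPi} (and the surrounding text points to \cite[Theorem 11.1.14]{AlKa}, \cite[Chapters 13 and 14]{DJT}, \cite{Mau03}, \cite{MilSch86} for detailed treatments). So there is no internal proof to compare against, and your proposal must be judged on its own terms. On those terms it is sound where it is explicit and honest where it is not. The implication (ii) $\Rightarrow$ (i) is complete and correct: with $x_k = J_n e_k$ one has $\big(\sum_{k=1}^n \|x_k\|^q\big)^{1/q} \geq \lambda^{-1} n^{1/q}$, while $\big\|\sum_{k=1}^n r_k x_k\big\| \leq \big\|\sum_{k=1}^n r_k e_k\big\|_{\ell^\infty_n} = 1$ pointwise in the signs, so no finite cotype-$q$ constant can exist for any $q<\infty$. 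For (i) $\Rightarrow$ (ii) you correctly identify that the upgrade from ``flat'' unit-vector systems (Rademacher average $n^{o(1)}$) to $(1+\eps)$-isometric copies of $\ell^\infty_n$ is precisely the content of the Maurey--Pisier theorem, and you defer it to \cite{MaPi}. That deferral matches the paper's own treatment exactly, but it does mean your text is a framed citation rather than a self-contained proof of the hard half.

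Three smaller points deserve correction or mention. First, the equal-norm reduction does not preserve the exponent: a lower bound $\alpha_n(X) \gtrsim n^{1/q}$ on equal-norm families yields cotype $r$ for every $r > q$ (via the usual dyadic decomposition of a general family into norm levels), not cotype $q$ itself; this costs nothing for your $\liminf$ characterisation of finite cotype, since only the existence of \emph{some} finite cotype matters, but the claim as stated is too strong. Second, the parenthetical ``for some (for all) $\lambda > 1$'' in (ii) is itself a nontrivial James-type non-distortion statement for $\ell^\infty_n$'s, which your $(1+\eps)$-reformulation uses silently; it is classical but should be flagged or cited. Third, for the $\ell^\infty$ end of the Maurey--Pisier spectrum the spreading-model-plus-Krivine machinery you outline is heavier than necessary: Krivine's theorem is needed for the type/$\ell^1$ part of the full theorem, whereas the cotype dichotomy used here admits a more direct Gaussian argument, see \cite[Theorem 14.1]{DJT}. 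None of these is a fatal gap, but a referee of a self-contained write-up would ask you to repair the first point and acknowledge the second.
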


There is a version for type as well:
\begin{theorem}\label{thm:MP2}
For a Banach space $X$ the following are equivalent:
\begin{enumerate}[(i)]
\item $X$ does not have nontrivial type.
\item $X$ contains $\ell^1_n$'s $\lambda$-uniformly for some (for all) $\lambda>1$.
\item $X^*$ does not have nontrivial type.
\end{enumerate}
\end{theorem}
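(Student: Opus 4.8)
The plan is to establish the three implications through the scheme $(\mathrm{ii})\Rightarrow(\mathrm{i})\Rightarrow(\mathrm{ii})$, the only nontrivial content being the second arrow, and then to deduce $(\mathrm{i})\Leftrightarrow(\mathrm{iii})$ from the characterisation in (ii) together with a duality argument. The easy direction $(\mathrm{ii})\Rightarrow(\mathrm{i})$ is a direct computation: if $J_n\colon\ell^1_n\to X$ witnesses $\lambda$-uniform containment and we set $x_k=J_ne_k$, then $\|x_k\|\le 1$ while $\|\sum_{k=1}^n\eps_k x_k\|\ge\lambda^{-1}n$ for every choice of signs $\eps_k=\pm 1$, so that $\E\|\sum_{k=1}^n r_k x_k\|\ge\lambda^{-1}n$. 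Comparing with the type-$p$ inequality forces $\tau_p(X)\ge\lambda^{-1}n^{1-1/p}$, which blows up as $n\to\infty$ for every $p>1$; hence $X$ has no nontrivial type.

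The heart of the matter is $(\mathrm{i})\Rightarrow(\mathrm{ii})$. I would introduce the upper type index $p(X)=\sup\{p\in[1,2]: X\text{ has type }p\}$; assumption (i) means precisely $p(X)=1$. The strategy, following Maurey and Pisier, is to show that $\ell^{p(X)}$ is finitely representable in $X$, which for $p(X)=1$ is exactly the $\lambda$-uniform containment of the $\ell^1_n$ (a separate, softer uniformisation argument then upgrades ``for some $\lambda>1$'' to ``for all $\lambda>1$''). First I would record that failure of type $p$ at every scale produces, for each $p>1$ and each constant, a finite family of vectors whose Rademacher average is disproportionately large; a submultiplicativity argument for these averages then pins the critical exponent to $p(X)$. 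The decisive step is to convert this quantitative failure into genuine $\ell^{p(X)}$-structure, and here I would invoke Krivine's theorem on the block finite representability of some $\ell^p$ inside any normalised sequence, together with the identification of that $p$ with $p(X)$. I expect this step to be the main obstacle: it is exactly the point where the elementary averaging estimates are insufficient and the full strength of the Maurey--Pisier/Krivine machinery (as in \cite{MaPi}) is required.

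Finally I would derive $(\mathrm{i})\Leftrightarrow(\mathrm{iii})$. For $(\mathrm{i})\Rightarrow(\mathrm{iii})$ the argument is elementary once (ii) is available: since $\ell^1_n$ embeds $\lambda$-uniformly in $X$ via subspaces $E_n\cong_\lambda\ell^1_n$, restricting functionals exhibits each dual $E_n^*\cong_\lambda\ell^\infty_n$ as a metric quotient of $X^*$, so the spaces $\ell^\infty_n$ — and hence their isometric subspaces $\ell^1_m$ with $m\sim\log n$ — occur as subspaces of uniform quotients of $X^*$. As type passes to both subspaces and quotients while the type-$p$ constants of $\ell^1_m$ grow like $m^{1-1/p}$, the space $X^*$ can have no nontrivial type, which is (iii). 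For the converse $(\mathrm{iii})\Rightarrow(\mathrm{i})$ I would argue contrapositively: if $X$ had nontrivial type then, by Pisier's $K$-convexity theorem, $X$ would be $K$-convex; since $K$-convexity is self-dual, $X^*$ would be $K$-convex and would therefore again have nontrivial type, contradicting (iii). This last implication is the subtle one, since mere type--cotype duality (type $p$ of $X$ yields only cotype $p'$ of $X^*$) returns no type information; the self-duality of nontrivial type genuinely relies on the $K$-convexity input.
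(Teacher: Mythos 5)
The paper never proves Theorem \ref{thm:MP2}: it is stated as background and quoted from the literature, with the reader referred to \cite{MaPi}, \cite{Pis82}, \cite[Theorem 11.1.14]{AlKa}, \cite[Chapters 13 and 14]{DJT}, \cite{Mau03} and \cite{MilSch86}. So there is no in-paper argument to compare with, and the right benchmark is the standard proofs in those references; your outline is consistent with them. The parts you actually execute are correct and complete. For (ii) $\Rightarrow$ (i), setting $x_k=J_ne_k$ does give $\E\|\sum_{k=1}^n r_kx_k\|\ge \lambda^{-1}n$ while the type-$p$ inequality caps this by $\tau_p(X)\,n^{1/p}$, so $\tau_p(X)\ge \lambda^{-1}n^{1-1/p}\to\infty$ for $p>1$. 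For (i) $\Rightarrow$ (iii), restriction of functionals indeed realizes $E_n^*\cong_\lambda \ell^\infty_n$ as a quotient of $X^*$ (Hahn--Banach makes $X^*\to E_n^*$ a metric surjection), the map $x\mapsto(\langle x,\varepsilon\rangle)_{\varepsilon\in\{-1,1\}^m}$ embeds $\ell^1_m$ isometrically into $\ell^\infty_{2^m}$, and type passes with constants to subspaces and quotients, so $X^*$ cannot have nontrivial type.

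The two deep implications, however, are named rather than proved, and you say so yourself: (i) $\Rightarrow$ (ii) \emph{is} the Maurey--Pisier theorem, and your sketch (type index, submultiplicativity, Krivine) identifies the route of \cite{MaPi} without carrying it out. As a self-contained proof this is a gap, though it is exactly the gap the paper itself leaves by citing the result. Two refinements are worth recording. First, at the boundary exponent $p=1$ the full Krivine/Maurey--Pisier machinery is avoidable: Giesy's combinatorial argument shows that a non-$B$-convex space contains $\ell^1_n$'s $\lambda$-uniformly for every $\lambda>1$ (this also delivers the ``for some $\lambda$ implies for all $\lambda$'' upgrade, via James's non-distortion of $\ell^1$), while Pisier's submultiplicativity argument shows $B$-convexity already implies nontrivial type. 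Second, your claim that (iii) $\Rightarrow$ (i) ``genuinely relies'' on the $K$-convexity theorem of \cite{Pis82} is overstated: once (i) $\Leftrightarrow$ (ii) is in hand, if $X^*$ has trivial type then $X^*$ contains $\ell^1_n$'s uniformly, your quotient argument gives $X^{**}$ trivial type, and the principle of local reflexivity (type being a local property, $X^{**}$ finitely representable in $X$) transfers this to $X$. The $K$-convexity route is of course also valid, and it is the one the paper alludes to immediately after the theorem.
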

In \cite{Pis82} it was shown that another equivalent statement is that $X$ is $K$-convex. For a detailed treatment of these results and much more, we refer to \cite[Theorem 11.1.14]{AlKa}, \cite[Chapter 13 and 14]{DJT}, \cite{Mau03} and \cite{MilSch86}.

Finally we state a simple consequence of Theorem \ref{thm:MP1} which will be applied several times.
\begin{corollary}\label{cor:MP1}
If $X$ does not have finite cotype, then for every $N\geq 1$, there exist $J_N:\ell^\infty_N\to X$ and $\widehat{I}_N:X\to \ell^\infty_N$ such that $\|J_N\|\leq 1$, $\|\widehat{I}_N\|\leq 2$
\[\widehat{I}_N J_N = \text{id}_{\ell^\infty_N} \ \ \text{and} \ \ J_N\widehat{I}_N|_{X_0} = \text{id}_{X_0},\]
where $X_0 = J_N \ell^\infty_N$.
\end{corollary}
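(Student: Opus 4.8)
The plan is to derive this directly from the Maurey--Pisier theorem (Theorem~\ref{thm:MP1}) together with the injectivity of $\ell^\infty_N$. Since $X$ fails to have finite cotype, Theorem~\ref{thm:MP1} applies with $\lambda = 2$ (indeed with any $\lambda>1$): for each $N\geq 1$ there is a map $J_N:\ell^\infty_N\to X$ with $\tfrac12\|a\|\leq \|J_N a\|\leq \|a\|$ for all $a\in \ell^\infty_N$. The upper estimate gives $\|J_N\|\leq 1$, while the lower estimate shows that $J_N$ is an isomorphism onto its range $X_0 = J_N\ell^\infty_N$, so that the inverse $J_N^{-1}:X_0\to\ell^\infty_N$ is well defined and bounded with $\|J_N^{-1}\|\leq 2$.

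The next and only substantial step is to extend $J_N^{-1}$ from $X_0$ to all of $X$ without increasing its norm beyond $2$. Here I would use that a bounded operator $T:X_0\to \ell^\infty_N$ is nothing but an $N$-tuple of functionals $x_1^*,\dots,x_N^*\in X_0^*$ via $Tx = (\langle x,x_1^*\rangle,\dots,\langle x,x_N^*\rangle)$, and that its norm equals $\max_{1\leq k\leq N}\|x_k^*\|$. Applying this to $T = J_N^{-1}$ and extending each coordinate functional by Hahn--Banach to $\tilde x_k^*\in X^*$ with $\|\tilde x_k^*\| = \|x_k^*\|$, I would define $\widehat{I}_N:X\to\ell^\infty_N$ by $\widehat{I}_N x = (\langle x,\tilde x_1^*\rangle,\dots,\langle x,\tilde x_N^*\rangle)$. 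By construction $\widehat{I}_N$ restricts to $J_N^{-1}$ on $X_0$, and its norm is again $\max_k\|\tilde x_k^*\| = \max_k\|x_k^*\| = \|J_N^{-1}\|\leq 2$. Equivalently, one invokes the fact that $\ell^\infty_N$ is a $1$-injective Banach space, which is exactly what makes such a norm-preserving extension possible.

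It then remains to read off the two identities. For $a\in\ell^\infty_N$ we have $J_N a\in X_0$, hence $\widehat{I}_N J_N a = J_N^{-1}J_N a = a$, giving $\widehat{I}_N J_N = \text{id}_{\ell^\infty_N}$. For $y\in X_0$, writing $y = J_N a$ we get $\widehat{I}_N y = a$ and therefore $J_N\widehat{I}_N y = J_N a = y$, i.e.\ $J_N\widehat{I}_N|_{X_0} = \text{id}_{X_0}$. The only point requiring care is the norm-preserving extension in the second paragraph; everything else is routine, and the choice $\lambda = 2$ is made precisely so that the resulting bound $\|\widehat{I}_N\|\leq 2$ comes out.
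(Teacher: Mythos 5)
Your proof is correct and takes essentially the same route as the paper's: both obtain $J_N$ from the Maurey--Pisier theorem with $\lambda=2$ and extend the inverse $I_N=J_N^{-1}$ from $X_0$ to all of $X$ by applying Hahn--Banach to its coordinate functionals (the paper writes these as $I_N^* e_n^*$, which is exactly your tuple $x_1^*,\dots,x_N^*$, and this is the $1$-injectivity of $\ell^\infty_N$ you invoke), yielding $\|\widehat{I}_N\|=\|I_N\|\leq 2$. The verification of the two identities is the same routine computation in both arguments.
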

\begin{proof}
Fix $N\geq 1$. By the Maurey-Pisier Theorem \ref{thm:MP1} we can find a bounded linear operator $J_N:\ell_N^\infty\to X$ such that $\frac{1}{2} \|x\|\leq \|J_N x\| \leq \|x\|$. Let $X_0 = J_N \ell_N^\infty$. Let $I_N:X_0\to \ell^\infty_N$ be the invertible operator given by $I_N x = e$ when $J_N e =x$. Let $(e_n^*)_{n=1}^N$ be the standard basis in $\ell^1$. For each $1\leq n\leq N$ let $x_n^* = I_N^* e_n^*\in X_0^*$ and let $z_n^*\in X^*$ be a Hahn-Banach extension of $x_n^*$. Then $\widehat{I}_N:X\to \ell^\infty_N$ given by $\widehat{I}_N x = (\lb x,z_n^*\rb)_{n=1}^N$ is an extension of $I_N$ which satisfies $\|\widehat{I}_N\| = \|I_N\|\leq 2$. From the construction it is clear that $\widehat{I}_N J_N = I_N J_N = \text{id}_{\ell^\infty_N}$.
\end{proof}

\begin{facts}
Let $X$ be a Banach space and let $p\in [1, \infty)$. The following hold:
\begin{enumerate}[(i)]
\item One always has
\begin{equation}\label{eq:randomseries2a}
\Big\|\sum_{n=1}^N r_n x_n \Big\|_{L^p(\Omega;X)} \leq \Big(\frac{\pi}{2}\Big)^{1/2}\Big\|\sum_{n=1}^N\gamma_n x_n \Big\|_{L^p(\Omega;X)},  \ \ \  x_1, \ldots, x_N\in X, \ N\geq 1.
\end{equation}
\item The space $X$ has finite cotype if and only if there is a constant $C$ such that
\begin{equation}\label{eq:randomseries2b}
\Big\|\sum_{n=1}^N \gamma_n x_n \Big\|_{L^p(\Omega;X)} \leq C \Big\|\sum_{n=1}^N r_n x_n \Big\|_{L^p(\Omega;X)},  \ \ \  x_1, \ldots, x_N\in X, \ N\geq 1.
\end{equation}
\end{enumerate}
\end{facts}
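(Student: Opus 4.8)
The plan is to prove the two parts separately: part (i) is an elementary consequence of Jensen's inequality, while part (ii) combines a clean ``only if'' argument based on the Maurey--Pisier theorem with the classical comparison theorem for the ``if'' direction, the latter being where the real difficulty lies.

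For (i) I would exploit that a standard Gaussian factors as its sign times its modulus. Setting $\eps_n = \sign \g_n$, the sequence $(\eps_n)$ is Rademacher and independent of $(|\g_n|)$, and $(\g_n)$ has the same distribution as $(\eps_n|\g_n|)$. Conditioning on $(\eps_n)$ and applying Jensen's inequality to the convex function $\n\cdot\n^p$, pushing the expectation over $(|\g_n|)$ inside the norm, gives
\[
\E\Big\|\sumnN \g_n x_n\Big\|^p = \E_\eps \E_{|\g|}\Big\|\sumnN \eps_n|\g_n| x_n\Big\|^p \ge \E_\eps\Big\|\sumnN \eps_n (\E|\g_n|)\, x_n\Big\|^p.
\]
Since $\E|\g_n| = (2/\pi)^{1/2}$, the right-hand side equals $(2/\pi)^{p/2}\,\E\|\sumnN r_n x_n\|^p$, and taking $p$-th roots yields \eqref{eq:randomseries2a}. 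The only inputs are the value $\E|\g_n| = \sqrt{2/\pi}$ and convexity of the norm.

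For the ``only if'' direction of (ii) I would argue by contraposition, using Theorem \ref{thm:MP1} and Corollary \ref{cor:MP1}. If $X$ does not have finite cotype, fix $N$ and take the operator $J_N:\ell^\infty_N\to X$ from Corollary \ref{cor:MP1}, so that $\tfrac12\|x\|\le\|J_Nx\|\le\|x\|$, and set $x_n = J_N e_n$. Because $\|\sumnN r_n e_n\|_{\ell^\infty_N} = \max_n|r_n| = 1$ almost surely, the Rademacher average stays bounded, $\|\sumnN r_n x_n\|_{L^p(\O;X)}\le 1$, whereas $\|\sumnN \g_n e_n\|_{\ell^\infty_N} = \max_n|\g_n|$ has $L^p$-norm of order $\sqrt{\log N}$, so $\|\sumnN \g_n x_n\|_{L^p(\O;X)}\ge \tfrac12\big(\E(\max_n|\g_n|)^p\big)^{1/p}\to\infty$ as $N\to\infty$. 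Hence no finite $C$ can satisfy \eqref{eq:randomseries2b}, which shows that \eqref{eq:randomseries2b} forces finite cotype. The passage between the $p$-th moment used here and any other moment is justified by the Kahane--Khintchine inequalities.

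The ``if'' direction --- finite cotype implies \eqref{eq:randomseries2b} --- is the main obstacle, being the classical comparison theorem; here I would simply invoke \cite[Lemma 4.5 and Proposition 9.14]{LeTa}. The natural self-contained route, which I expect to be the hard part, is to represent each Gaussian as a central limit of normalized Rademacher sums, $\g_n \overset{d}{=}\lim_{M\to\infty}M^{-1/2}\sum_{j=1}^M r_{n,j}$, so that $\E\|\sumnN \g_n x_n\| = \lim_{M\to\infty} M^{-1/2}\,\E\|\sum_{n=1}^N\sum_{j=1}^M r_{n,j}x_n\|$; the crux is then the estimate $M^{-1/2}\,\E\|\sum_{n,j} r_{n,j} x_n\|\le C\,\E\|\sumnN r_n x_n\|$ with $C$ independent of $M$ and $N$. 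It is precisely here that finite cotype is indispensable to keep $C$ uniform in $N$, since in $\ell^\infty_N$ this constant unavoidably grows like $\sqrt{\log N}$. As this uniform bound is exactly the content of the cited results, I would not reprove it but cite it.
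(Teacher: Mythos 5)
Your proposal is correct and takes essentially the same approach as the paper, which offers no argument of its own but simply cites \cite[Propositions 12.11 and 12.27]{DJT} and \cite[Chapter 9]{LeTa}: your sign--modulus factorization plus Jensen for (i) (yielding the sharp constant $(\pi/2)^{1/2}$ via $\E|\gamma_n|=\sqrt{2/\pi}$), your Maurey--Pisier/$\ell^\infty_N$ contrapositive with $\E\max_{n\le N}|\gamma_n|\gtrsim (\log N)^{1/2}$ for the implication ``\eqref{eq:randomseries2b} implies finite cotype'', and your deferral to \cite{LeTa} for the hard implication ``finite cotype implies \eqref{eq:randomseries2b}'' are exactly the standard arguments behind those citations. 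The only blemish is terminological: you have interchanged the labels ``if'' and ``only if'' in (ii) (in ``$X$ has finite cotype if and only if there is a constant $C$\ldots'', the ``if'' direction is that the inequality implies finite cotype), but since you state each implication explicitly in words this causes no mathematical harm.
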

For (i) see \cite[Proposition 12.11]{DJT}. For (ii) see \cite[Proposition 12.27]{DJT} and \cite[Chapter 9]{LeTa}.

\begin{remark}\label{rem:montgom}
If $X$ has finite cotype, then it follows from \eqref{eq:randomseries2a} and \eqref{eq:randomseries2b} that $T\in \calL(X,Y)$ has Rademacher cotype $q$ if and only if it has Gaussian cotype $q$.
On the other hand, in \cite[Theorem 1C.5.3]{Montgomthesis} it is shown that for $2\leq p<q<\infty$ for all $N\geq 2$ large enough, there is a nonzero $T\in \calL(\ell^\infty_N,L^q)$ such that $C_p(T)\geq q^{-1/2} \log(N) C_p^{\gamma}(T)$.
\end{remark}

In the following result we summarize some of the known results on $R$-boundedness and $\gamma$-boundedness which will be needed.
\begin{proposition}\label{prop:wellknown}
Let $X$ and $Y$ be Banach spaces. Let $\mathscr{T}\subseteq \calL(X,Y)$.
\begin{enumerate}[(i)]
\item If $\mathscr{T}$ is $R$-bounded, then it is $\g$-bounded, and $\mathcal{R}^\g(\mathscr{T})\leq \mathcal{R}(\mathscr{T})$.
\item If $\mathscr{T}$ is $\g$-bounded then it is uniformly bounded and $\mathcal{U}(\mathscr{T}) \leq \mathcal{R}^\g(\mathscr{T})$.
\item Assume $X$ has finite cotype. If $\mathscr{T}$ is $\g$-bounded, then it is $R$-bounded, and $\mathcal{R}(\mathscr{T})\leq C \mathcal{R}^\g(\mathscr{T})$, where $C$ is a constant which only depends on $X$.
\end{enumerate}
\end{proposition}
\begin{proof}
(i) follows from the fact that $(\g_n)_{n\geq 1}$ and $(r_n \g_n)_{n\geq 1}$ have the same distribution.
(ii) is obvious. (iii) follows from \eqref{eq:randomseries2b}.
\end{proof}

\begin{remark}\label{rem:someresults} \
\begin{enumerate}[(i)]
\item For other connections between $R$-boundedness, type and cotype we refer to \cite{BlaFouSch}, \cite{vG}, \cite{HKK}, \cite{HytVer} and \cite{VerWei10}.
\item Recall the following result due to Pisier. If every uniformly bounded family is $R$-bounded then $X$ has
cotype $2$ and $Y$ has type $2$ (see \cite[Proposition 1.13]{ArBu}). The same result holds for $\g$-boundedness which follows from the same proof.
\end{enumerate}
\end{remark}

\medskip

The following lemma gives a connection between $R$-boundedness and cotype.
\begin{lemma}\label{lem:cotypeRbdd}
Let $T_1, \ldots, T_N\in\calL(\ell^\infty_M, \R)$ and let $\mathscr{T} =\{T_n: 1\leq n\leq N\}$. Let $A:\ell^\infty_M\to \ell^\infty_N$ be given by $A x = (T_n x)_{n=1}^N$. Then $\mathcal{R}(\mathscr{T}) = C_{2}(A)$ and $\mathcal{R}^\gamma(\mathscr{T}) = C_{2}^\gamma(A)$.
\end{lemma}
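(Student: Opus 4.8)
The plan is to exploit that the codomain of each $T_n$ is the scalar field $\R$, so that the randomized sums in the definitions of $\mathcal{R}(\mathscr{T})$ and $\mathcal{R}^\gamma(\mathscr{T})$ collapse to plain $\ell^2$-sums. For any indices $n_1, \ldots, n_K \in \{1, \ldots, N\}$ and vectors $x_1, \ldots, x_K \in \ell^\infty_M$, orthonormality of the Rademacher sequence gives $\E|\sum_{k=1}^K r_k T_{n_k} x_k|^2 = \sum_{k=1}^K |T_{n_k} x_k|^2$, and the identity holds verbatim with $r_k$ replaced by $\g_k$. Hence $\mathcal{R}(\mathscr{T})$ is the least constant $C$ such that
\[
\Big(\sum_{k=1}^K |T_{n_k} x_k|^2\Big)^{1/2} \leq C \Big\|\sum_{k=1}^K r_k x_k\Big\|_{L^2(\Omega;\ell^\infty_M)}
\]
for all $K$, all $(n_k)$ and all $(x_k)$, while $\mathcal{R}^\gamma(\mathscr{T})$ admits the identical characterization with $(\g_k)$ in place of $(r_k)$. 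I would record this reduction first, and note that the definition of $R$-boundedness explicitly permits repetitions among the selected operators, which is used below.

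For the inequality $\mathcal{R}(\mathscr{T}) \le C_2(A)$ I would dominate each scalar $|T_{n_k} x_k|$ by $\max_{1\le n \le N} |T_n x_k| = \|A x_k\|_{\ell^\infty_N}$. Summing squares and applying the Rademacher cotype-$2$ estimate for $A$ gives $\sum_k |T_{n_k} x_k|^2 \le \sum_k \|A x_k\|_{\ell^\infty_N}^2 \le C_2(A)^2 \|\sum_k r_k x_k\|^2_{L^2(\Omega;\ell^\infty_M)}$, so the characterizing constant for $\mathcal{R}(\mathscr{T})$ is at most $C_2(A)$. For the reverse inequality $C_2(A) \le \mathcal{R}(\mathscr{T})$, given $x_1, \ldots, x_K$ I would choose for each $k$ an index $n_k$ attaining the maximum defining $\|A x_k\|_{\ell^\infty_N}$, so that $\|A x_k\|_{\ell^\infty_N} = |T_{n_k} x_k|$. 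Inserting this particular (possibly repeating) choice into the reduced $R$-bound inequality produces exactly the cotype-$2$ estimate for $A$ with constant $\mathcal{R}(\mathscr{T})$, whence $C_2(A) \le \mathcal{R}(\mathscr{T})$. Combining the two inequalities yields $\mathcal{R}(\mathscr{T}) = C_2(A)$, and the Gaussian statement follows by replacing $r_k$ with $\g_k$ throughout, giving $\mathcal{R}^\gamma(\mathscr{T}) = C_2^\gamma(A)$.

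There is no deep obstacle; the proof is an exact matching of two definitions. The only points requiring care are that the scalar codomain renders the Rademacher (resp. Gaussian) averages genuinely orthogonal, so that one obtains equality of constants rather than a mere two-sided estimate, and that the $L^2$-moments used both in the definition of $R$-boundedness and in the definition of cotype $2$ coincide, so that no Kahane--Khintchine comparison and hence no loss of constant is incurred. One must also invoke that repeated operators are admissible in the $R$-bound, which is precisely what the index selection in the reverse inequality exploits.
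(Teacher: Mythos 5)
Your proof is correct and follows essentially the same route as the paper's: orthogonality of the scalar-valued randomized sums to reduce both bounds to $\ell^2$-sums, the pointwise domination $|T_{n}x| \le \|Ax\|_{\ell^\infty_N}$ for one inequality, and the index selection attaining $\max_{1\le n\le N}|T_n x_k| = |T_{n_k}x_k|$ for the other, with the identical replacement of $(r_k)$ by $(\gamma_k)$ for the Gaussian statement. Your explicit remarks that repetitions are admissible in the $R$-bound and that the $L^2$-moments match those in the cotype-$2$ definition (so no Kahane--Khintchine loss occurs) are points the paper leaves implicit, but the argument is the same.
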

\begin{proof}
Let $S_1, \ldots, S_k\in \mathscr{T}$ and $x_1, \ldots, x_k\in \ell^\infty_M$. Then
\begin{align*}
\E|\sum_{i=1}^k r_i S_i x_i \Big|^2 = \sum_{i=1}^k |S_i x_i|^2 & \leq \sum_{i=1}^k \|(T_n x_i)_{n=1}^N \|_{\ell^\infty_N}^2
= \sum_{i=1}^k \|A x_i\|^2_{\ell^\infty_N} \leq C_2(A)^2 \E\Big\|\sum_{i=1}^k r_i x_i \Big\|_{\ell^\infty_M}^2
\end{align*}
and this shows that $\mathcal{R}(\mathscr{T}) \leq  C_2(A)$. Conversely, for $x_1, \ldots, x_k\in \ell^\infty_M$ choose $S_1, \ldots, S_k\in \mathscr{T}$ such that $\max_{1\leq n\leq N}|T_n x_i| = |S_i x_i|$. Then
\begin{align*}
\sum_{i=1}^k \|A x_i\|^2_{\ell^\infty_N} = \sum_{i=1}^k \|(T_n x_i)_{n=1}^N \|_{\ell^\infty_N}^2 = \sum_{i=1}^k |S_i x_i|^2\leq \mathcal{R}(\mathscr{T})^2 \E\Big\|\sum_{i=1}^k r_i x_i \Big\|_{\ell^\infty_M}^2.
\end{align*}
from which we obtain  $C_2(A)\leq \mathcal{R}(\mathscr{T})$. The proof of $\mathcal{R}^\gamma(\mathscr{T}) = C_{2}^\gamma(A)$ is similar.
\end{proof}

The next simple type of uniform boundedness principle will be used several times. For a set $S$ let $\mathcal{P}(S)$ denote its power set.
\begin{lemma}\label{lem:uniformbdd}
Let $V$ be a vector space. Let $\Phi_i:\mathcal{P}(V)\to [0, \infty]$ for $i=1, 2$ be such that
the following properties hold:
\begin{enumerate}[\rm(1)]
\item for all $A\subseteq V$ and $\lambda\in \R$, $\Phi_i(\lambda A) = |\lambda| \Phi_i(A)$.
\item If $A\subseteq B\subseteq V$, then $\Phi_i(A) \leq \Phi_i(B)$.
\item If $A_1, A_2, \cdots \subseteq V$, then  $\Phi_i\Big(\bigcup_{n=1}^\infty A_n\Big) \leq \sum_{n=1}^\infty \Phi_i(A_n)$.
\end{enumerate}
If for every $n\geq 1$ there exists a subset $B_n\subseteq V$ such that $\Phi_1(B_n)\leq 1$ and $\Phi_2(B_n)\geq c_n$ with $c_n\uparrow \infty$, then there
exists a set $A\subseteq V$ such that $\Phi_1(A) \leq 1$ and $\Phi_2(A)=\infty$.
\end{lemma}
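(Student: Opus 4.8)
The plan is to realize the desired set $A$ as a countable union of suitably rescaled copies of the sets $B_n$. The guiding idea is that properties (1)--(3) make $\Phi_1$ and $\Phi_2$ behave like ``outer seminorms'' on subsets of $V$: homogeneity lets us shrink a set to make its $\Phi_1$-value small, subadditivity controls the $\Phi_1$-value of a union, and monotonicity lets us bound the $\Phi_2$-value of the union from below by that of any single piece. The task is therefore to choose scaling factors so that the total $\Phi_1$-mass stays summable while the individual $\Phi_2$-contributions become unbounded.

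Concretely, I would first pass to a subsequence. Since $c_n\uparrow\infty$, I can choose indices $n_1<n_2<\cdots$ with $c_{n_k}\geq k\,2^k$ for every $k\geq 1$. I then set
\[
A = \bigcup_{k=1}^\infty 2^{-k} B_{n_k}.
\]
For the upper bound on $\Phi_1$, subadditivity (3) together with homogeneity (1) gives
\[
\Phi_1(A) \leq \sum_{k=1}^\infty \Phi_1(2^{-k} B_{n_k}) = \sum_{k=1}^\infty 2^{-k}\,\Phi_1(B_{n_k}) \leq \sum_{k=1}^\infty 2^{-k} = 1,
\]
using $\Phi_1(B_{n_k})\leq 1$. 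For the lower bound on $\Phi_2$, monotonicity (2) and homogeneity (1) yield, for each fixed $k$, that $2^{-k}B_{n_k}\subseteq A$ and hence
\[
\Phi_2(A) \geq \Phi_2(2^{-k}B_{n_k}) = 2^{-k}\,\Phi_2(B_{n_k}) \geq 2^{-k} c_{n_k} \geq 2^{-k}\cdot k\,2^k = k.
\]
Letting $k\to\infty$ forces $\Phi_2(A)=\infty$, which is exactly what is claimed.

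There is no serious obstacle here: this is a gliding-hump / uniform boundedness argument and the three stated axioms are precisely tailored to make it run. The only point requiring a little care is the balancing of the two competing demands, namely that the scaling factors $2^{-k}$ be summable (to keep $\Phi_1(A)\leq 1$) while $2^{-k}c_{n_k}$ still tends to infinity (to force $\Phi_2(A)=\infty$). This is possible exactly because $c_n\to\infty$, which lets me select a subsequence along which $c_{n_k}$ outgrows $2^k$ by an arbitrary factor; one could equally replace $k\,2^k$ by any threshold of the form $a_k 2^k$ with $a_k\to\infty$, so the specific choice is immaterial.
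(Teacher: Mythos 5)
Your proof is correct and is essentially the paper's own argument: the paper likewise chooses (a subsequence of) the $B_n$ with $\Phi_2(B_n)\geq 4^n$ and sets $A=\bigcup_{n=1}^\infty 2^{-n}B_n$, so that $\Phi_1(A)\leq\sum_n 2^{-n}\leq 1$ by (1) and (3), while $\Phi_2(A)\geq 2^{-n}\Phi_2(B_n)\geq 2^n$ by (1) and (2). Your choice of threshold $k\,2^k$ in place of $4^k$ is, as you note, immaterial; you have simply written out the details the paper leaves to the reader.
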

\begin{proof}
For every $n\geq 1$ choose $A_n\subseteq V$ such that $\Phi_1(A_n)\leq 1$ and $\Phi_2(A_n)\geq 4^n$.
Setting $A = \bigcup_{n=1}^\infty 2^{-n} A_n$ one may check that the assertions hold.
\end{proof}

For $A,B\in \R$, we will write $A\lesssim_t B$ if there exists a constant $C$ depending only on $t$ such that $A \leq CB$.

\section{Proof of Theorem \ref{thm:main}}

We start with a characterization of the $R$-bound of a certain family of functionals on $c_0$.
\begin{proposition}\label{prop:Rboundedch}
Let $(a_n)_{n\geq 1}$ be scalars. Let $(T_n)_{n\geq 1}$ be the elements of $(c_0)^* = \ell^1$ given by
$T_n x = a_n x_n$.  Then $\mathcal{R}(T_n, n\geq 1) = \|a\|_{\ell^2}$.
\end{proposition}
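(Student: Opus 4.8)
The plan is to exploit the one-dimensionality of the codomain to turn the $R$-bound into a concrete quantity, and then to prove the two inequalities $\mathcal{R}(\mathscr{T})\le \|a\|_{\ell^2}$ and $\mathcal{R}(\mathscr{T})\ge \|a\|_{\ell^2}$ separately. First, since each $T_n$ takes values in the scalars and the Rademacher variables are orthonormal in $L^2$, for any selection $S_1,\ldots,S_k\in\mathscr{T}$ and $y_1,\ldots,y_k\in c_0$ one has
\[
\E\Big|\sum_{i=1}^k r_i S_i y_i\Big|^2 = \sum_{i=1}^k |S_i y_i|^2 = \sum_{i=1}^k |a_{n_i}|^2\,|y_i(n_i)|^2,
\]
where $S_i = T_{n_i}$ and $y_i(m)$ denotes the $m$-th coordinate of $y_i$. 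Thus the whole problem reduces to comparing this sum of squares with $\E\|\sum_i r_i y_i\|_{c_0}^2$.

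For the upper bound I would group the indices by the coordinate they select: for each $n$ put $I_n=\{i: n_i=n\}$, so that the sum above equals $\sum_n |a_n|^2 \sum_{i\in I_n}|y_i(n)|^2$. The point is that, for each fixed $n$, a single coordinate is dominated by the full $c_0$-norm:
\[
\sum_{i\in I_n}|y_i(n)|^2 \le \sum_{i=1}^k |y_i(n)|^2 = \E\Big|\sum_{i=1}^k r_i y_i(n)\Big|^2 \le \E\,\sup_m\Big|\sum_{i=1}^k r_i y_i(m)\Big|^2 = \E\Big\|\sum_{i=1}^k r_i y_i\Big\|_{c_0}^2.
\]
Summing this against $|a_n|^2$ over $n$ gives $\sum_i |S_i y_i|^2 \le \|a\|_{\ell^2}^2\,\E\|\sum_i r_i y_i\|_{c_0}^2$, which is exactly $\mathcal{R}(\mathscr{T})\le\|a\|_{\ell^2}$. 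The grouping step is what allows possibly repeated operators to be handled directly, so no appeal to Remark \ref{rem:equal} is needed.

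For the lower bound I would test with $k=N$, $S_i=T_i$, and $y_i=e_i$ the $i$-th unit vector of $c_0$. Then $S_i y_i = a_i$, so the left-hand side of the defining inequality equals $\big(\sum_{i=1}^N |a_i|^2\big)^{1/2}$, whereas $\sum_{i=1}^N r_i e_i$ has $c_0$-norm $\sup_{m\le N}|r_m| = 1$ almost surely, so the right-hand side is just $\mathcal{R}(\mathscr{T})$. Hence $\big(\sum_{i=1}^N |a_i|^2\big)^{1/2}\le \mathcal{R}(\mathscr{T})$ for every $N$, and letting $N\to\infty$ yields $\|a\|_{\ell^2}\le\mathcal{R}(\mathscr{T})$. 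Combining the two bounds gives the claimed equality.

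I do not expect a serious obstacle here; the only genuinely delicate point is the per-coordinate domination in the upper bound. Its content is precisely that the norm of $c_0$ is a supremum, so that each individual coordinate is controlled by the whole norm — this is what makes $|a_n|^2$ sum into $\|a\|_{\ell^2}^2$, and everything else is bookkeeping.
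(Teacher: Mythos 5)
Your proof is correct. The lower bound coincides with the paper's: both test against the standard unit vectors, for which $\big\|\sum_{i=1}^N r_i e_i\big\|_{c_0}=1$ almost surely, and let $N\to\infty$. For the upper bound, however, you take a genuinely different route. The paper estimates $|T_n x_n|\le \|T_n\|\,\|x_n\|$, bounds $\sup_n\|x_n\|$ by $\big\|\sum_n r_n x_n\big\|_{L^2(\Omega;c_0)}$, and, because that computation only covers selections of distinct operators, must invoke the randomization argument of Remark \ref{rem:equal} (i.e.\ \cite[Lemma 3.3]{CPSW}) to pass to arbitrary selections with repetitions. You instead treat repetitions head-on: after grouping the indices by the selected coordinate, you use orthogonality of the Rademachers \emph{coordinatewise} to write $\sum_{i}|y_i(n)|^2=\E\big|\sum_i r_i y_i(n)\big|^2$, and then dominate this fixed coordinate pointwise in $\omega$ by the $c_0$-norm of $\sum_i r_i y_i$ before taking expectations; summing against $|a_n|^2$ gives the defining inequality with constant $\|a\|_{\ell^2}$ for \emph{all} selections at once. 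What your version buys is self-containedness (no appeal to the external randomization lemma, as you note), at the cost of slightly more bookkeeping; what the paper's version buys is brevity, given that lemma. Both yield the sharp constant, and your per-coordinate domination is exactly the same structural fact (the $c_0$-norm is a supremum over coordinates) that underlies the paper's step $\sup_n\|x_n\|\le\big\|\sum_n r_n x_n\big\|_{L^2(\Omega;c_0)}$, just deployed after the orthogonality identity rather than before it.
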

\begin{proof}
In the sequel we write $\|\cdot\|$ for $\|\cdot\|_{c_0}$. For any $(x_n)_{n=1}^N$ one has
\begin{align*}
\Big\|\sum_{n=1}^N r_n T_n x_n\Big\|_{L^2(\O)} &= \Big(\sum_{n=1}^N |T_n x_n|^2\Big)^{1/2}
 \leq \Big(\sum_{n=1}^N \|x_n\|^2 \|T_n\|^2 \Big)^{1/2} \\ &\leq \|a\|_{\ell^2} \sup_{1\leq n\leq N}  \|x_n\|\leq  \|a\|_{\ell^2} \Big\|\sum_{n=1}^N r_n x_n\Big\|_{L^2(\O;c_0)}.
\end{align*}
By Remark \ref{rem:equal} this implies that $\mathcal{R}(T_n, n\geq 1) \leq \|a\|_{\ell^2}$.
Next choose $\eps>0$ arbitrary. Fix an integer $N\geq 1$ such that $\|a\|_{\ell^2} - \eps \leq \Big(\sum_{n=1}^N |a_n|^2\Big)^{1/2}$. Let $(x_n)_{n=1}^N$ in $c_0$ be defined by $x_{nn} = 1$ and $x_{nm} = 0$ for $m\neq n$ and $n=1, \ldots, N$. Then
\begin{align*}
\|a\|_{\ell^2} - \eps  \leq \Big(\sum_{n=1}^N a_n^2\Big)^{1/2} &= \Big(\sum_{n= 1}^N |T_n x_n|^2 \Big)^{1/2}
 =
\Big\|\sum_{n=1}^N r_n T_n x_n\Big\|_{L^2(\O)} \\ & \leq \mathcal{R}(T_n, n\geq 1) \Big\|\sum_{n\geq 1} r_n x_n\Big\|_{L^2(\O;c_0)}
\\ & = \mathcal{R}(T_n, n\geq 1)  \|\sup_{m\geq 1}\|r_m x_{mm}\|_{L^2(\O)} = \mathcal{R}(T_n, n\geq 1) .
\end{align*}
\end{proof}

In order to estimate the $\g$-bound of a specific family of coordinate functionals we need the following lemma which is a variant of \cite[Proposition 3.1, page 50]{Montgomthesis}.
Our modification of the proof  is more concise and gives a better
constant.
\begin{lemma}\label{lem:Sudakov}
Let $n\geq 1$ be fixed. Let $(x_i)_{i=1}^n$ be real numbers. Then
\begin{equation}\label{eq:estsup}
\Big(\frac{\log n}{n} \sum_{i=1}^n x_i^2\Big)^{1/2} \leq 4\,\E\sup_{i\leq n} |\gamma_i x_i|.
\end{equation}
\end{lemma}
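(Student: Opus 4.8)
The plan is to bound $\E\sup_{i\le n}|\gamma_i x_i|$ from below by reducing to the case of equal coefficients, where the statement becomes a lower bound on the expected maximum of finitely many standard Gaussians. Both sides of \eqref{eq:estsup} are invariant under permutations and sign changes of the $x_i$: the law of $(\gamma_i)$ is symmetric, so $|\gamma_i x_i|$ depends only on $|x_i|$, and a permutation of the $x_i$ can be absorbed into a permutation of the i.i.d.\ $\gamma_i$. Hence I may assume $x_1\ge x_2\ge\cdots\ge x_n\ge 0$.

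The key reduction is the inequality, valid for every $1\le k\le n$,
\[
\E\sup_{i\le n}|\gamma_i x_i|\ \ge\ x_k\,\E\max_{i\le k}|\gamma_i|.
\]
Indeed, if $j^\ast\le k$ is the (random) index at which $\max_{i\le k}|\gamma_i|$ is attained, then $x_{j^\ast}\ge x_k$ by the ordering, so $\sup_{i\le n}|\gamma_i x_i|\ge |\gamma_{j^\ast}|\,x_{j^\ast}\ge x_k\max_{i\le k}|\gamma_i|$; taking expectations gives the claim. It then suffices to combine a lower bound for the Gaussian maximum with a purely deterministic inequality for the sequence $(x_k)$.

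For the probabilistic ingredient I would prove that there is a universal constant $c>0$ with $\E\max_{i\le k}|\gamma_i|\ge c\sqrt{\log(k+1)}$ for all $k\ge1$. This follows from $\E\max_{i\le k}|\gamma_i|\ge t\,\P(\max_{i\le k}|\gamma_i|>t)=t\bigl(1-(1-p)^k\bigr)$ with $p=\P(|\gamma_1|>t)$: choosing $t$ so that $p=1/k$ gives $(1-p)^k\le e^{-1}$, hence $\E\max_{i\le k}|\gamma_i|\ge(1-e^{-1})t$, and the standard lower Gaussian tail bound $\P(|\gamma_1|>t)\ge \tfrac{2}{\sqrt{2\pi}}\,\tfrac{t}{1+t^2}e^{-t^2/2}$ shows that this $t$ is of order $\sqrt{\log k}$ (the matching upper bound $\P(|\gamma_1|>t)\le e^{-t^2/2}$ pins it down). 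For the deterministic ingredient, writing $A=\max_{1\le k\le n}x_k^2\log(k+1)$ and using $x_k^2\le A/\log(k+1)$ together with the elementary estimate $\sum_{k=1}^n 1/\log(k+1)\le C\,n/\log n$ (split the sum at $k\simeq\sqrt n$), I obtain $\sum_{k=1}^n x_k^2\le C\,\tfrac{n}{\log n}A$, i.e.\ $A\ge C^{-1}\tfrac{\log n}{n}\sum_i x_i^2$. Note that the term $k=1$ in $A$, where $\log(k+1)=\log 2>0$, is what captures a single dominant coefficient. Combining the two bounds yields $\E\sup_i|\gamma_i x_i|\ge c\sqrt A\ge c\,C^{-1/2}\bigl(\tfrac{\log n}{n}\sum_i x_i^2\bigr)^{1/2}$.

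The main obstacle is purely quantitative: making the constants in the two ingredients explicit enough that $c\,C^{-1/2}\ge 1/4$, and checking the small values of $k$ and $n$ by hand (for $n=1$ the left-hand side vanishes, so there is nothing to prove). Tracking the constant through the Gaussian tail estimate for $\E\max_{i\le k}|\gamma_i|$ is the delicate step; the deterministic sum estimate and the reduction to equal coefficients are routine.
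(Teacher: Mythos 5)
Your proof strategy is sound and genuinely different from the paper's. You in effect prove the stronger, classical order-statistics minoration: after rearranging so that $x_1\ge\cdots\ge x_n\ge 0$, you establish $\E\sup_{i\le n}|\gamma_i x_i|\ge x_k\,\E\max_{i\le k}|\gamma_i|\ge c\,x_k\sqrt{\log(k+1)}$ for every $k$ (your random-index argument for the first inequality is correct), and then descend to the $\ell^2$ form via the deterministic estimate $\sum_{k=1}^n 1/\log(k+1)\le C\,n/\log n$. This is essentially the standard two-sided description of $\E\sup_i|\gamma_i x_i|$ in terms of $\max_k x_k\sqrt{\log(k+1)}$, so each ingredient is robust and the argument proves more than needed. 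The paper argues quite differently: it normalizes $\E\sup_i|\gamma_i x_i|=1$, bounds $\sum_i \P(|\gamma_i x_i|\ge t)$ by $\P(\sup>t)/\P(\sup\le t)\le 1/(t-1)$ using a L\'evy-type maximal inequality from Kwapie\'n--Woyczy\'nski, inserts Komatsu's lower Gaussian tail bound, and then exploits convexity of $\Theta(y)=y e^{-1/(2y)}$ via Jensen's inequality to treat all $n$ coefficients simultaneously. That aggregation step is what produces the explicit constant $4$ in one stroke; your route handles one level $k$ at a time and pays for it in constants.

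That is the one genuine caveat: as written you obtain \eqref{eq:estsup} with an unspecified universal constant, whereas the lemma asserts the constant $4$, which the paper uses verbatim downstream (in Lemma \ref{lem:Gaussbound} and the proof of Theorem \ref{thm:main}). The bookkeeping is tighter than your sketch suggests. First, the quantile step has a wrinkle at small $k$: already at $k=2$ one has $\P(|\gamma_1|>\sqrt{\log 2})\approx 0.405<1/2$, so the lower tail bound does not yield $t_k\ge\sqrt{\log k}$ there; using the exact quantile $t_2\approx 0.6745$, your bound $t_k\bigl(1-(1-1/k)^k\bigr)$ gives $c\approx 0.48$ at $k=2$, which is the minimum over $k$. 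Second, your split of $\sum_k 1/\log(k+1)$ at $\sqrt n$ forces $C\approx 3$ near $n\approx e^2$ because of the $\sqrt n/\log 2$ term. Numerically $c\,C^{-1/2}\approx 0.28$, so the target $1/4$ is reachable but with little room, and only after the hand-checks at small $k$ and $n$ that you defer. Since the paper itself remarks that $4$ is not optimal and the main theorems only require some universal constant, this is a quantitative loose end rather than a conceptual flaw; still, to prove the lemma as stated you must carry the constants through explicitly.
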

The constant $4$ on the right-hand side of \eqref{eq:estsup} is not optimal.

\begin{proof}
It suffices to consider the case $n\geq 2$.
Without loss of generality we can assume $\E\sup_{i\leq n} |\gamma_i x_i| = 1$ and $x_i >0$ for all $i$. Fix $t>1$. Since $\P(\sup_{1\leq j\leq n} |\gamma_i x_i|>t) \leq 1/t$, it follows from \cite[Proposition 1.3.3]{KwWo} that
\[\sum_{i=1}^n \P(|\gamma_i x_i|\geq t) \leq \frac{\P(\sup_{1\leq j\leq n}  |\gamma_i x_i|>t)}{\P(\sup_{1\leq j\leq n} |\gamma_i x_i|\leq t)} \leq \frac{1}{t-1}.\]
Recalling Komatsu's bound (see \cite[Proposition 3]{SzWe}):
\[\sqrt{2\pi} \, \P(\gamma_i>s) = \int_s^\infty e^{-x^2/2} \, dx \geq \frac{2}{s+(s^2+4)^{1/2}} e^{-s^2/2},  \ \ \ s\in \R,\]
we find that with $y_i = x_i/t$
\[\frac{2}{\sqrt{2\pi}} \sum_{i=1}^n \frac{2 y_i}{1+(1+4y_i^2)^{1/2}} e^{-1/(2y_i^2)} \leq \sum_{i=1}^n \P(|\gamma_i x_i|\geq t) \leq  \frac{1}{t-1}.\]
Note that for every $i$, one has $|y_i| = t^{-1}\sqrt{\frac{\pi}{2}} \, \E |\gamma_i x_i|\leq \sqrt{\frac{\pi}{2}}$. Therefore,
\[\frac{2}{\sqrt{2\pi}}  \frac{2 y_i}{1+(1+4y_i^2)^{1/2}}\geq \frac{y_i^2}{K},\]
where $K = \frac{\pi(1+\sqrt{1+2\pi}))}{4}\approx 2.9$. Hence letting $\Theta(y) = y e^{-1/(2y)}$ we find that $\frac{1}{K} \sum_{i=1}^n \Theta(y_i^2) \leq \frac{1}{t-1}$.
Since $\Theta$ is convex we obtain that
\[\Theta\Big(\frac{1}{n}\sum_{i=1}^n y_i^2\Big) \leq \frac{K}{n(t-1)}.\]
It is straightforward to check that $\Theta(y)\geq e^{-1/y}$ for all $y>0$. Therefore, $\Theta^{-1}(u) \leq -\frac{1}{\log(u)}$ for all $u\in (0,1)$, and we obtain
\[\frac{1}{n}\sum_{i=1}^n x_i^2 \leq -\frac{t^2}{\log(K/(n(t-1)))}.\]
Now the result follows by taking $t=K+1$.
\end{proof}

\begin{remark}\label{rem:Gaussian}
A lower estimate for the constant used in \eqref{eq:estsup} follows from the following claim:
\begin{align}\label{eq:claimest}
\E \big(\sup_{i\leq n} |\gamma_i|^2\big) \leq 2\log(2n).
\end{align}
Indeed, taking $x_i = 1$ for $i=1, \ldots, n$ with $n\geq 1$ in \eqref{eq:estsup} arbitrary gives that the constant at the right-hand side of \eqref{eq:estsup} cannot be smaller than $2^{-1/2}$. To prove the claim we follow the argument in \cite[Lemma 3.2]{DGVW}. Let $\xi = \sup_{i\leq n} |\gamma_i|$ and let $h:[0,\infty)\to [1, \infty)$ be given by $h(t)= \cosh(t^{1/2})$. One easily checks that $h$ is convex and strictly increasing and $h^{-1}(s) = \log(s+(s^2-1)^{1/2})^2 \leq \log(2s)^2$. It follows from Jensen's inequality that for every $t>0$,
\begin{align*}
\E \xi^2 & =t^{-2}\E h^{-1}(\cosh(t\xi)) \leq t^{-2} h^{-1}(\E\cosh(t\xi)) \leq t^{-2} \log(2\E\cosh(t\xi))^2,
\\  \E\cosh(t\xi) & = \E \sup_{i\leq n} \cosh(t\gamma_i) \leq \sum_{i=1}^n \E \cosh(t\gamma_i) = n \E\exp(t\g_1) = n e^{t^2/2}.
\end{align*}
Combining both estimates yields that $\E\xi^2 \leq (t^{-1}\log(2n) +  t/2)^{2}$, and \eqref{eq:claimest} follows by taking
$t = \sqrt{2 \log(2n)}$.
\end{remark}

\begin{lemma}\label{lem:Gaussbound}
Let $(T_n)_{n\geq 1}$ be elements of $(c_0)^* = \ell^1$ given by
$T_n x = x_n$.  Then for all $N\geq 2$,
\[ \Big(\frac{N}{2\log 2N}\Big)^{1/2}\leq \g\big(T_n, 1\leq n\leq N\big) \leq 4 \Big(\frac{N}{\log N}\Big)^{1/2}.\]
\end{lemma}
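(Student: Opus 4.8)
The plan is to read both estimates off the definition of the $\g$-bound after a few reductions. Writing a generic family member as $S_i=T_{n_i}$ we have $S_iy=y_{n_i}$, so $\E\big|\sum_i\g_iS_iy_i\big|^2=\sum_i|(y_i)_{n_i}|^2$, and the defining inequality of $\g(T_n,1\le n\le N)$ reads
\[
\Big(\sum_i|(y_i)_{n_i}|^2\Big)^{1/2}\le C\Big(\E\Big\|\sum_i\g_iy_i\Big\|_{c_0}^2\Big)^{1/2}.
\]
For the lower bound I would simply test with $N$ terms, $y_i=e_i$ and $S_i=T_i$: the left-hand side equals $\sqrt N$, while $\big\|\sum_{i\le N}\g_ie_i\big\|_{c_0}=\max_{m\le N}|\g_m|$, and \eqref{eq:claimest} gives $\E\max_{m\le N}|\g_m|^2\le 2\log(2N)$. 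Dividing yields $\g(T_n,1\le n\le N)\ge\big(N/(2\log 2N)\big)^{1/2}$.

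For the upper bound I would first make two reductions. Since each $T_n$ only sees the first $N$ coordinates, replacing every $y_i$ by its truncation to coordinates $\le N$ leaves the left-hand side unchanged and can only shrink $\sup_m$ on the right, so I may assume $y_i\in\ell^\infty_N$. For fixed $y_i$ the largest possible left-hand side is obtained by taking $n_i$ to be an index at which $m\mapsto|(y_i)_m|$ attains its maximum, turning the numerator into $\sum_i\|y_i\|_\infty^2$. Grouping the indices by the chosen coordinate, $J_m=\{i:n_i=m\}$, and setting $b_m^2=\sum_{i\in J_m}\|y_i\|_\infty^2$, we have $\sum_i\|y_i\|_\infty^2=\sum_{m=1}^N b_m^2$. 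Writing $v_m=\sum_i\g_i(y_i)_m$ and $u_m=\sum_{i\in J_m}\g_i(y_i)_m$, the blocks $J_m$ are disjoint, so the $u_m$ are independent centred Gaussians with $\E|u_m|^2=b_m^2$.

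Granting the comparison $\E\sup_m|u_m|^2\le\E\sup_m|v_m|^2$, the upper bound follows from Lemma \ref{lem:Sudakov}. The numbers $(b_m)_{m=1}^N$ (with $b_m=0$ when $J_m=\emptyset$) form a list of length exactly $N$, and $u_m$ has the law of $b_m\g_m$; hence Lemma \ref{lem:Sudakov} gives
\[
\Big(\tfrac{\log N}{N}\sum_{m=1}^N b_m^2\Big)^{1/2}\le 4\,\E\sup_{m\le N}|b_m\g_m|=4\,\E\sup_m|u_m|\le 4\big(\E\sup_m|u_m|^2\big)^{1/2}.
\]
Rearranging and invoking the comparison, and noting that $\sup_m|v_m|=\big\|\sum_i\g_iy_i\big\|_{c_0}$ after the truncation, we obtain $\big(\sum_i\|y_i\|_\infty^2\big)^{1/2}=\big(\sum_m b_m^2\big)^{1/2}\le 4(N/\log N)^{1/2}\big(\E\|\sum_i\g_iy_i\|_{c_0}^2\big)^{1/2}$, i.e. $\g(T_n,1\le n\le N)\le 4(N/\log N)^{1/2}$.

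The hard part is the comparison, equivalently the assertion that replacing each $y_i$ by $(y_i)_{n_i}e_{n_i}$ does not increase $\E\|\sum_i\g_iy_i\|_{c_0}^2$. For each fixed $m$ one has $u_m=\E[v_m\mid(\g_i)_{i\in J_m}]$ and $v_m=u_m+w_m$ with $w_m$ independent of $u_m$; the obstruction is that the conditioning depends on $m$ and that the families $(u_m)_m$ and $(w_m)_m$ are not jointly independent. Consequently neither a one-line Jensen step nor a Sudakov--Fernique comparison is available (the natural increment inequality genuinely fails), and conditioning on all but one Gaussian is also insufficient, since the resulting conditional inequality is false. I would instead argue globally, in the spirit of \cite{Montgomthesis}: one first gets the easy bound $\E\sup_m|u_m|\le\E\sup_m|u_m+w_m'|$ by replacing $w_m$ with an independent copy $w_m'$ (here Jensen does apply), and then controls the recoupling using the lattice structure of $\|\cdot\|_{c_0}$ together with the dominance $|(y_i)_m|\le\|y_i\|_\infty$ valid for $i\in J_m$. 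A cleaner-looking but not genuinely shorter route is to bypass the configuration via Lemma \ref{lem:cotypeRbdd}, which identifies $\g(T_n,1\le n\le N)$ with the Gaussian cotype-$2$ constant $C_2^\g(\text{id}_{\ell^\infty_N})$, and then to invoke the two-sided estimate $C_2^\g(\ell^\infty_N)\eqsim (N/\log N)^{1/2}$; its upper half, however, rests on exactly the same comparison, so this is where the real work must be done.
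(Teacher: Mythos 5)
Your lower bound is exactly the paper's (test against the standard basis and invoke \eqref{eq:claimest}), and your reduction of the upper bound follows the paper's skeleton: group the indices by the chosen functional, compute $\E_\g\big|\sum_i \g_i S_i y_i\big|^2=\sum_{m=1}^N b_m^2$, observe that $(u_m)_{m=1}^N$ (the paper's $(\Gamma_{mm})_{m=1}^N$, with $\Gamma_m=\sum_{i\in J_m}\g_i y_i$) has the same law as $(b_m\g_m)_{m=1}^N$ by disjointness of the blocks, and apply Lemma \ref{lem:Sudakov}. But the proof is incomplete at its central point: the comparison $\E\sup_{m\le N}|u_m|^2\le \E\big\|\sum_i\g_i y_i\big\|_{c_0}^2$ is explicitly ``granted'' and then only sketched. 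The sketch does not close the gap. Your easy Jensen step gives $\E\sup_m|u_m|\le \E\sup_m|u_m+w_m'|$ with $w_m'$ an independent copy, but this points the wrong way: you would then need the recoupling inequality $\E\sup_m|u_m+w_m'|\lesssim \E\sup_m|u_m+w_m|$, i.e.\ decoupled $\le$ coupled, which is the hard direction and is not valid in general (here $w_m$ is correlated with $u_{m'}$ for $m'\ne m$, so no independence structure is available), and the appeal to ``the lattice structure together with $|(y_i)_m|\le\|y_i\|_\infty$'' names no mechanism. So the inequality on which the whole upper bound rests is assumed rather than proved.

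Moreover, your meta-claim that ``neither a one-line Jensen step nor a Sudakov--Fernique comparison is available'' is too pessimistic: the paper closes exactly this gap with a short randomized Jensen argument. By orthogonality of the Rademachers, pointwise in $\O_\g$ and for each $n\le N$,
\begin{equation*}
\Gamma_{nn}=\E_r\Big[\sum_{m=1}^N r_m r_n \Gamma_{mn}\Big],
\end{equation*}
so with $I_r$ the coordinatewise sign-flip isometry of $c_0$,
\begin{equation*}
\sup_{1\le n\le N}|\Gamma_{nn}|^2\le \Big\|\E_r\Big[I_r\Big(\sum_{m=1}^N r_m\Gamma_m\Big)\Big]\Big\|^2
\le \E_r\Big\|\sum_{m=1}^N r_m\Gamma_m\Big\|^2
\end{equation*}
by vector-valued Jensen and the isometry property; taking $\E_\g$ and using that the blocks $\Gamma_1,\dots,\Gamma_N$ are independent and symmetric removes the signs, yielding $\E\sup_n|\Gamma_{nn}|^2\le\E\big\|\sum_i\g_i y_i\big\|^2$, which is precisely \eqref{eq:iddistr} combined with the paper's estimate. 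The only role of the ``lattice structure'' is that sign changes are isometries of $c_0$. Incidentally, your preliminary reduction to maximizing coordinates $n_i$ is harmless but unnecessary: the paper's argument works for an arbitrary assignment $i\mapsto n_i$, which also spares you the appeal to Remark \ref{rem:equal}-type considerations for $\g$-boundedness (where, as the paper notes, no reduction to distinct operators is known).
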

Note that Proposition \ref{prop:Rboundedch} yields that $\mathcal{R}(T_n, 1\leq n\leq N) = N^{1/2}$, and hence there is a logarithmic improvement in the above $\g$-bound.

\begin{proof}
Fix $N\geq 2$. Let $(S_j)_{j=1}^J\subseteq \{T_n, 1\leq n\leq N\}$. We will first show that
for all $x_1, \ldots, x_J\in c_0$ one has
\begin{equation}\label{eq:gammabddnotequal}
\Big\|\sum_{j=1}^J \gamma_j S_j(x_j)\Big\|_{L^2(\O)}\leq 4 \Big(\frac{N}{\log N}\Big)^{1/2} \Big\|\sum_{j=1}^J \gamma_j x_j\Big\|_{L^2(\O;c_0)}.
\end{equation}

For $1\leq n\leq N$, let $A_n = \{j: S_j = T_n\}$. Clearly, the $(A_n)_{n=1}^N$ are pairwise disjoint.
Let $a_n = \Big(\sum_{j\in A_n} |T_n(x_j)|^2\Big)^{1/2}$ for $n=1, \ldots, N$.
It follows from orthogonality and Lemma \ref{lem:Sudakov} that
\begin{align}\label{eq:estmax}
\E_{\g}\Big|\sum_{j=1}^J \gamma_j S_j(x_j)\Big|^2 & = \sum_{j=1}^J |S_j(x_j)|^2 = \sum_{n=1}^N a_n^2 \leq \frac{16N}{\log N} \E\sup_{1\leq n\leq N} |\gamma_n a_n |^2.
\end{align}
Let $\Gamma_n= \sum_{j\in A_n} \gamma_j x_j$ for $1\leq n\leq N$. Since $(\Gamma_{nn})_{n=1}^N$ are independent Gaussian random variables and $\E|\Gamma_{nn}|^2 = a_n^2$, it follows that $(\Gamma_{nn})_{n=1}^N$ and $(\gamma_n a_n)_{n=1}^N$ have equal distributions. This yields
\begin{align}\label{eq:iddistr}
\E\sup_{1\leq n\leq N} |\gamma_n a_n |^2 = \E\sup_{1\leq n\leq N} |\Gamma_{nn}|^2.
\end{align}
For signs $(\epsilon_k)_{k\geq 1}$ let $I_{\epsilon}$ on $c_0$ be the isometry given by $I_{\epsilon} ((\alpha_k)_{k\geq 1}) =  (\epsilon_k\alpha_k)_{k\geq 1}$.
It follows that pointwise in $\O_\g$ one has
\begin{align*}
\sup_{1\leq n\leq N} |\Gamma_{nn}|^2 & = \sup_{1\leq n\leq N} \Big|\E_{r}\Big[\sum_{m=1}^N r_m r_n \Gamma_{mn}\Big]\Big|^2 \\ &
\leq \sup_{n\geq 1} \Big|\E_{r}\Big[\sum_{m=1}^N r_m r_n \Gamma_{mn}\Big]\Big|^2
= \Big\|\E_{r}\Big[ I_r \Big(\sum_{m=1}^N r_m \Gamma_m \Big)\Big]\Big\|^2
\\ &  \leq \E_{r}\Big\|I_r \Big(\sum_{m=1}^N r_m \Gamma_m \Big)\Big\|^2
=\E_{r}\Big\|\sum_{m=1}^N r_m \Gamma_m \Big\|^2,
\end{align*}
where we applied Jensen's inequality and the fact that $I_r$ is an isometry. Combining the above estimate with \eqref{eq:iddistr} and using that $\Gamma_1, \ldots, \Gamma_N$ are independent and symmetric we obtain
\begin{align*}
\E\sup_{1\leq n\leq N} |\gamma_n a_n |^2 & \leq \E_{\g}\E_{r} \Big\|\sum_{m=1}^N r_m \Gamma_m \Big\|^2
= \E_{\g} \Big\|\sum_{m=1}^N \Gamma_m\Big\|^2 = \E\Big\|\sum_{j=1}^J \gamma_j x_j\Big\|^2.
\end{align*}
Now \eqref{eq:gammabddnotequal} follows if we combine the latter estimate with \eqref{eq:estmax}.

To prove the lower estimate, let $(x_n)_{n\geq 1}$ be the standard basis for $c_0$. Let $g_N= \mathcal{R}^\g(T_n:1\leq n\leq N)$. The result follows from
\begin{align*}
N&= \E\Big|\sum_{n=1}^N \gamma_n T_n x_n\Big|^2\leq g_N^2 \E\Big\|\sum_{n=1}^N \gamma_n x_n\Big\|^2  =  g_N^2 \E \sup_{1\leq n\leq N}|\gamma_n|^2 \leq  g_N^2 2\log(2N),
\end{align*}
where we applied \eqref{eq:claimest}.
\end{proof}

As a consequence of Lemma \ref{lem:Gaussbound} we find the following result which provides an example that the Rademacher cotype and Gaussian cotype of operators are not comparable in general (cf. \cite[Theorem 1C.5.3]{Montgomthesis} and Remark \ref{rem:montgom}).
\begin{corollary}
Let $(T_n)_{n\geq 1}$ be elements of $(c_0)^* = \ell^1$ given by
$T_n x = x_n$. Let $A:\ell^\infty_N\to \ell^\infty_N$ be given by $A x = (T_n x)_{n=1}^N$. Then for all $N\geq 2$,
\[ \frac14 (\log(N))^{1/2} C_2^{\gamma}(A) \ \leq  \ C_2(A)\ \leq  \ (2\log(2N))^{1/2} C_2^{\gamma}(A).\]
\end{corollary}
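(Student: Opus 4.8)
The plan is to observe that the corollary is a direct consequence of the three results already proved in this section, once one recognizes that $A$ is the identity. Since $T_n x = x_n$, the operator $A\colon \ell^\infty_N\to\ell^\infty_N$ given by $Ax = (T_n x)_{n=1}^N = (x_1,\ldots,x_N)$ is nothing but $\text{id}_{\ell^\infty_N}$. This places us exactly in the setting of Lemma \ref{lem:cotypeRbdd} with $M=N$ and $\mathscr{T} = \{T_n : 1\leq n\leq N\}$, so that $C_2(A) = \mathcal{R}(\mathscr{T})$ and $C_2^{\gamma}(A) = \mathcal{R}^{\gamma}(\mathscr{T})$. I would then only need the values of these two bounds, which are provided by Proposition \ref{prop:Rboundedch} and Lemma \ref{lem:Gaussbound}.

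The one point requiring a word of justification is that Lemma \ref{lem:cotypeRbdd} computes the $R$- and $\gamma$-bounds of $\mathscr{T}$ as functionals on $\ell^\infty_N$, whereas Proposition \ref{prop:Rboundedch} and Lemma \ref{lem:Gaussbound} treat them as functionals on $c_0$. These bounds coincide: each $T_n$ with $n\leq N$ reads only the first $N$ coordinates, and $\ell^\infty_N$ embeds isometrically into $c_0$ as the span of the first $N$ unit vectors. Passing from an arbitrary test vector $x_j\in c_0$ to its truncation $\widetilde{x}_j\in\ell^\infty_N$ leaves $\sum_j \gamma_j S_j x_j$ unchanged while it does not increase $\|\sum_j \gamma_j x_j\|$, so the supremum defining the $\gamma$-bound is attained on $\ell^\infty_N$; the same argument applies verbatim to the $R$-bound. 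Hence the cotype constants of $\text{id}_{\ell^\infty_N}$ are exactly the $R$- and $\gamma$-bounds of the coordinate functionals computed on $c_0$.

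With this identification, I would insert the known values: Proposition \ref{prop:Rboundedch} (with $a_1=\cdots=a_N=1$, as noted after Lemma \ref{lem:Gaussbound}) gives $C_2(A) = \mathcal{R}(\mathscr{T}) = N^{1/2}$, while Lemma \ref{lem:Gaussbound} gives
\[
\Big(\frac{N}{2\log 2N}\Big)^{1/2} \ \leq \ C_2^{\gamma}(A) = \mathcal{R}^{\gamma}(\mathscr{T}) \ \leq \ 4\Big(\frac{N}{\log N}\Big)^{1/2}.
\]
The final step is pure algebra. The lower bound on $C_2^{\gamma}(A)$ rearranges to the right inequality $C_2(A) = N^{1/2} \leq (2\log 2N)^{1/2} C_2^{\gamma}(A)$, and the upper bound on $C_2^{\gamma}(A)$ rearranges to the left inequality $\tfrac14(\log N)^{1/2} C_2^{\gamma}(A) \leq N^{1/2} = C_2(A)$; together these are precisely the claimed two-sided estimate. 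There is essentially no obstacle beyond the bookkeeping of the domain identification in the second paragraph, which is the only place a slip could occur, since one must be sure that the cotype constants of $\text{id}_{\ell^\infty_N}$ are captured by the coordinate functionals on $c_0$ rather than on some larger space; everything else is a direct substitution of the sharp constants already computed.
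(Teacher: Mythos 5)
Your proposal is correct and follows exactly the paper's route: identify $A$ with $\mathrm{id}_{\ell^\infty_N}$, apply Lemma \ref{lem:cotypeRbdd} to convert cotype constants into $R$- and $\gamma$-bounds, plug in $\mathcal{R}(\mathscr{T})=N^{1/2}$ from Proposition \ref{prop:Rboundedch} and the two-sided estimate of Lemma \ref{lem:Gaussbound}, and rearrange. The only difference is that you spell out the (correct) truncation argument identifying the bounds for the coordinate functionals on $c_0$ with those on $\ell^\infty_N$, a point the paper treats as immediate.
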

\begin{proof}
This is immediate from Lemmas \ref{lem:cotypeRbdd} and \ref{lem:Gaussbound}, where we note that $C_2(A) = \mathcal{R}(\{T_n:1\leq n\leq N\} = \sqrt{N}$.
\end{proof}

We now turn to the proof of one of the main results.
\begin{proof}[Proof of Theorem \ref{thm:main}]
The implication (ii) $\Rightarrow$ (i) has already been mentioned in Proposition \ref{prop:wellknown}.

To prove (i) $\Rightarrow$ (ii) we use Lemma \ref{lem:Gaussbound}. Assume (i) holds. Assume $X$ does not have finite cotype. We will derive a contradiction. Since we may use a one-dimensional subspace of $Y$, it suffices to consider $Y = \R$. We claim that for every $N\geq 1$ there exists a $\mathscr{S}_N\subseteq \calL(X,\R)$ such that $\mathcal{R}^\g(\mathscr{S}_N)\leq 1$ and $\mathcal{R}(\mathscr{S}_N)\geq c_N$ with $c_N\uparrow \infty$ as $N\to \infty$.
For each $N\geq 1$ choose $J_N:\ell^\infty_N\to X$ and $\widehat{I}_N:X\to \ell^\infty_N$ and $X_0$ as in Corollary \ref{cor:MP1}.
Let $T_n:\ell^\infty_N\to \R$ be given by $T_n x = \frac{1}{8} \Big(\frac{\log N}{N}\Big)^{1/2} x_n$ for each $1\leq n\leq N$. Let $\mathscr{T}_N = \{T_n: 1\leq n\leq N\}$. Then as a consequence of Lemma \ref{lem:Gaussbound} we have $\mathcal{R}^\g(\mathscr{T}_N)\leq 1/2 $. From Proposition \ref{prop:Rboundedch} we find that
\[\mathcal{R}(\mathscr{T}_N)= \Big(\sum_{n=1}^N \|T_n\|^2\Big)^{1/2} = \frac{1}{8} (\log N)^{1/2}.\]
Now let $(S_n)_{n=1}^N$ be given by $S_n  = T_n \widehat{I}_N$ and $\mathscr{S}_N = \{S_n: 1\leq n\leq N\}\subseteq \calL(X,\R)$. Then by \eqref{eq:obvious} one has $\mathcal{R}^\g(\mathscr{S}_N)\leq \|\widehat{I}_N\| \mathcal{R}^\g(\mathscr{T}_N)\leq 1$. Moreover, by \eqref{eq:obvious} one has
\begin{align*}
\frac{1}{8} (\log N)^{1/2} = \mathcal{R}(\mathscr{T}_N)\leq  \mathcal{R}(\mathscr{S}_N|_{X_0}) \|J_N\|\leq \mathcal{R}(\mathscr{S}_N).
\end{align*}
Now by Lemma \ref{lem:uniformbdd} we can find a family $\mathscr{S}\subseteq \calL(X,\R)$ which is $\gamma$-bounded but not $R$-bounded. This yields a contradiction.
\end{proof}

\section{R-boundedness versus $\ell^2$-boundedness\label{sec:square}}

In this section we discuss another boundedness notion which is connected to $R$-boundedness and $\g$-boundedness.
\begin{definition}
Let $X$ and $Y$ be Banach lattices. An operator family $\mathscr{T}\subseteq \calL(X,Y)$ is called {\em $\ell^2$-bounded} if there exists a constant $C\ge 0$ such that for all $N\geq 1$, for all $(x_n)_{n=1}^N$ in $X$ and
$(T_n)_{n=1}^N$ in ${\mathscr {T}}$ we have
 \begin{equation}\label{eq:Sbdd}
 \Big\|\Big(\sum_{n=1}^N |T_n x_n|^2\Big)^{1/2} \Big\| \le C\Big\|\Big(\sum_{n=1}^N |x_n|^2\Big)^{1/2}\Big\n.
\end{equation}
The least admissible constant $C$ is called the {\em $\ell^2$-bound} of
$\mathscr {T}$. Notation $\mathcal{R}^{\ell^2}(\mathscr{T})$ or $\mathcal{R}^2(\mathscr{T})$.
\end{definition}

\begin{remark}\label{rem:latticecase}\
\begin{enumerate}[(i)]
\item The notion $\ell^2$-boundedness is the same as $R_s$-boundedness with $s=2$ as was introduced in \cite{Weis-maxreg}. A detailed treatment of the subject and applications can be found in \cite{KuUl}.
\item The square functions in \eqref{eq:Sbdd} are formed using Krivine's calculus (see \cite{LiTz}).
\item Clearly, every $\ell^2$-bounded family is uniformly bounded.
\item\label{it:singleton} A singleton $\{T\}\subseteq \calL(X,Y)$ is $\ell^2$-bounded and $\mathcal{R}^2(\{T\})\leq K_G \|T\|$,
where $K_G$ denotes the Grothendieck constant (see \cite[Theorem 1.f.14]{LiTz}).
\item\label{it:product} For lattices $X$, $Y$ and $Z$ and two families $\mathscr{T}\in \calL(X,Y)$ and $\mathscr{S}\in \calL(Y,Z)$  one has
\[
\mathcal{R}^2(\{ST: S\in \mathscr{S}, T\in \mathscr{T}\}) \leq \mathcal{R}^2(\mathscr{S}) \mathcal{R}^2(\mathscr{T}).
\]
\end{enumerate}
\end{remark}

In order to check $\ell^2$-boundedness it suffices to consider distinct operators in \eqref{eq:Sbdd}.
\begin{lemma}\label{lem:equal}
Let $X$ and $Y$ be Banach lattices and let $\mathscr{T}\subseteq \calL(X,Y)$. If there is a constant $M>0$ such that for all $N\geq 1$ and all distinct choices $T_1, \ldots, T_N\in \mathscr{T}$, one has
\[ \Big\|\Big(\sum_{n=1}^N |T_n x_n|^2\Big)^{1/2} \Big\| \le M\Big\|\Big(\sum_{n=1}^N |x_n|^2\Big)^{1/2}\Big\n, \ \ \ x_1, \ldots x_N\in X,
\]
then $\mathcal{R}^{2}(\mathscr{T})\leq K_G M$, where $K_G$ denotes the Grothendieck constant.
\end{lemma}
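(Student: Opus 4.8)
The plan is to deduce the general $\ell^2$-bound from the distinct-operator hypothesis by a single application of the Grothendieck bound for one operator (part (iv) of Remark~\ref{rem:latticecase}), applied not to an operator between $X$ and $Y$ but to an associated diagonal operator between the $\ell^2$-valued lattices $X(\ell^2)$ and $Y(\ell^2)$. The key point is that the multiplicities with which the operators of $\mathscr T$ are repeated can be absorbed into the $\ell^2$-index appearing in Remark~\ref{rem:latticecase}(iv), so that all collisions are handled by a \emph{single} factor $K_G$ rather than one factor per repeated operator.

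First I would fix $N\ge 1$, an arbitrary choice $T_1,\dots,T_N\in\mathscr T$ and $x_1,\dots,x_N\in X$, and group the indices according to the value of the operator: let $S_1,\dots,S_K\in\mathscr T$ be the \emph{distinct} operators occurring among the $T_n$ and set $A_k=\{n:T_n=S_k\}$. Form the Banach lattices $X(\ell^2_K)$ and $Y(\ell^2_K)$ together with the diagonal operator $P\colon X(\ell^2_K)\to Y(\ell^2_K)$, $P(z_k)_{k=1}^K=(S_kz_k)_{k=1}^K$. Because $S_1,\dots,S_K$ are pairwise distinct, the distinct-operator hypothesis applied to $z_1,\dots,z_K$ gives
\[
\|P(z_k)_k\|_{Y(\ell^2_K)}=\Big\|\Big(\sum_{k=1}^K|S_kz_k|^2\Big)^{1/2}\Big\|_Y\le M\Big\|\Big(\sum_{k=1}^K|z_k|^2\Big)^{1/2}\Big\|_X=M\,\|(z_k)_k\|_{X(\ell^2_K)},
\]
that is, $\|P\|\le M$. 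This is the one place where distinctness is used.

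Next I would apply part (iv) of Remark~\ref{rem:latticecase} to the single operator $P$ between the lattices $X(\ell^2_K)$ and $Y(\ell^2_K)$, obtaining $\mathcal R^2(\{P\})\le K_G\|P\|\le K_GM$; explicitly, for all $\xi_1,\dots,\xi_J\in X(\ell^2_K)$,
\[
\Big\|\Big(\sum_{j=1}^J|P\xi_j|^2\Big)^{1/2}\Big\|_{Y(\ell^2_K)}\le K_GM\Big\|\Big(\sum_{j=1}^J|\xi_j|^2\Big)^{1/2}\Big\|_{X(\ell^2_K)}.
\]
To recover the desired inequality I would choose $J=\max_k|A_k|$ and encode the original vectors: writing $A_k=\{n_{k,1},\dots,n_{k,m_k}\}$ with $m_k=|A_k|$, let the $k$-th coordinate of $\xi_j$ be $x_{n_{k,j}}$ when $j\le m_k$ and $0$ otherwise. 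Using the Fubini-type associativity of Krivine's square function (the canonical lattice isometry identifying the iterated square function on $X(\ell^2_K)$ with the square function on $X(\ell^2_{KJ})$, and likewise for $Y$), the left-hand side becomes $\|(\sum_n|T_nx_n|^2)^{1/2}\|_Y$ and the right-hand side becomes $K_GM\|(\sum_n|x_n|^2)^{1/2}\|_X$. Since $T_1,\dots,T_N$ and $x_1,\dots,x_N$ were arbitrary, this yields $\mathcal R^2(\mathscr T)\le K_GM$.

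The only genuinely technical point — and the step I expect to be the main obstacle to write out carefully — is the bookkeeping in the last paragraph: verifying that $X(\ell^2_K)$ and $Y(\ell^2_K)$ really are Banach lattices to which Remark~\ref{rem:latticecase}(iv) applies, and that the iterated Krivine square functions collapse coordinatewise in the claimed isometric fashion, so that both sides reduce exactly to the square functions over the single index $n$ (the padding zeros contributing nothing). Both are standard properties of the $\ell^2$-valued extension of a Banach lattice, but they must be invoked explicitly; everything else is a direct substitution.
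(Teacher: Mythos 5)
Your proof is correct and follows essentially the same route as the paper's: group the $T_n$ by the distinct operators $S_1,\dots,S_K$, bound the diagonal operator on the $\ell^2_K$-valued lattice by $M$ using the distinct-operator hypothesis, apply Remark \ref{rem:latticecase}~\eqref{it:singleton} once to this single operator, and collapse the iterated Krivine square functions. The only difference is bookkeeping: the paper uses $N$ auxiliary vectors $\tilde x_i \in X(\ell^2_K)$, each with a single nonzero coordinate, whereas you pack each group $A_k$ into $J=\max_k |A_k|$ vectors with zero padding --- an equivalent encoding of the same double-index collapse.
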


\begin{proof}
Let $T_1, \ldots, T_N\subseteq \mathscr{T}$ and $x_1, \ldots, x_N\in X$ be arbitrary. Let $S_1, \ldots, S_M\in \mathscr{T}$ be distinct and such that  $\{S_1, \ldots, S_M\} = \{T_1, \ldots, T_N\}$. For each $1\leq m\leq M$ let $I_m = \{i: T_i = S_m\}$. Then $(I_m)_{m=1}^M$ are disjoint sets. For each $1\leq m\leq M$ let $x_{m,i} = x_i$ if $i\in I_m$ and $x_{m,i} = 0$ otherwise.

For each $1\leq i\leq N$ let $\tilde{x}_i\in X(\ell^2_M)$ be given by $\tilde{x}_i(m) = x_{m,i}$ and let $\tilde{S}:X(\ell^2_M)\to Y(\ell^2_M)$ be given by $\tilde{S} ((y_m)_{m=1}^M) = (S_m y_m)_{m=1}^M$.
By the assumption we have that $\|\tilde{S}\|_{\calL(X(\ell^2_M),Y(\ell^2_M))}\leq \mathcal{R}^2(\mathscr{T})$. From Remark \ref{rem:latticecase} \eqref{it:singleton}, we see that
\begin{align*}
 \Big\|&\Big(\sum_{n=1}^N |T_n x_n|^2\Big)^{1/2} \Big\|_{Y}
 =  \Big\|\Big(\sum_{i=1}^N |\tilde{S} \tilde{x}_i|^2 \Big)^{1/2} \Big\|_{Y(\ell^2_M)}
 \leq K_G M \Big\|\Big(\sum_{i=1}^N |\tilde{x}_i|^2 \Big)^{1/2} \Big\|_{X(\ell^2_M)}
\\&  = K_G M \Big\|\Big(\sum_{m=1}^M \sum_{i=1}^N |x_{m,i}|^2 \Big)^{1/2} \Big\|_{X}
  = K_G M \Big\|\Big(\sum_{n=1}^N |x_{n}|^2 \Big)^{1/2} \Big\|_{X}.
\end{align*}
\end{proof}

\begin{facts}
Let $X$ be a Banach lattice and let $p\in [1, \infty)$. The following hold:
\begin{enumerate}[(i)]
\item One always has
\begin{equation}\label{eq:randomseries3a}
\Big\|\Big(\sum_{n=1}^N |x_n|^2\Big)^{1/2}\Big\|_{X} \leq \sqrt{2} \Big\|\sum_{n=1}^N r_n x_n \Big\|_{L^p(\Omega;X)},  \ \ \  x_1, \ldots, x_N\in X, \ N\geq 1.
\end{equation}
\item The space $X$ has finite cotype if and only if there is a constant $C$ such that
\begin{equation}\label{eq:randomseries3b}
\Big\|\sum_{n=1}^N r_n x_n \Big\|_{L^p(\Omega;X)}\leq C\Big\|\Big(\sum_{n=1}^N |x_n|^2\Big)^{1/2}\Big\|_{X} ,  \ \ \  x_1, \ldots, x_N\in X, \ N\geq 1.
\end{equation}
\end{enumerate}
\end{facts}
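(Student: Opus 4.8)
For part (i), the plan is to lift the scalar Khintchine inequality to the lattice by means of Krivine's functional calculus. The scalar estimate $\big(\sum_{n=1}^N a_n^2\big)^{1/2}\le \sqrt{2}\,\E_r\big|\sum_{n=1}^N r_n a_n\big|$ (the $L^1$-Khintchine inequality with its optimal constant $1/\sqrt2$) holds for all real $a_1,\dots,a_N$, and both sides are continuous, positively homogeneous functions of $(a_n)$ built only from addition, scalar multiplication, and $|\cdot|$. Writing $\E_r\big|\sum_n r_n a_n\big| = 2^{-N}\sum_{\eps\in\{-1,1\}^N}\big|\sum_n \eps_n a_n\big|$ and applying Krivine's calculus (see \cite{LiTz} and Remark \ref{rem:latticecase}) produces the lattice inequality $\big(\sum_{n=1}^N |x_n|^2\big)^{1/2}\le \sqrt{2}\,2^{-N}\sum_{\eps}\big|\sum_n \eps_n x_n\big|$ in $X$. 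Taking norms, applying the triangle inequality, and recognizing the right-hand side as $\sqrt2\,\E_r\big\|\sum_n r_n x_n\big\|$ gives \eqref{eq:randomseries3a} for $p=1$; the general case $p\in[1,\infty)$ then follows from Jensen's inequality, since $\E_r\|\cdot\|\le (\E_r\|\cdot\|^p)^{1/p}$ on a probability space.

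For the forward implication of (ii) (finite cotype implies \eqref{eq:randomseries3b}) the plan is to invoke the Khintchine--Maurey inequality for Banach lattices. One uses that a Banach lattice has finite cotype if and only if it is $q$-concave for some $q<\infty$, and that $q$-concave lattices satisfy the upper estimate $\E\big\|\sum_n r_n x_n\big\|\lesssim_X \big\|\big(\sum_n|x_n|^2\big)^{1/2}\big\|$ (see \cite[Theorem 1.d.6]{LiTz} and \cite[Chapter 11]{DJT}). Together with part (i) this yields the two-sided equivalence of the Rademacher average and the square function whenever $X$ has finite cotype.

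For the reverse implication of (ii), suppose \eqref{eq:randomseries3b} holds; I prove finite cotype by contraposition. If $X$ fails to have finite cotype then, by the lattice form of the Maurey--Pisier theorem (Theorem \ref{thm:MP1} strengthened to \emph{sublattice} copies; see \cite{LiTz}), for every $m\ge1$ there is a lattice embedding $J\colon \ell^\infty_{2^m}\to X$ with $\lambda^{-1}\|a\|\le\|Ja\|\le\|a\|$ and $|Ja|=J|a|$. Index the coordinates of $\ell^\infty_{2^m}$ by $\eps\in\{-1,1\}^m$ and set $u_k(\eps)=m^{-1/2}\eps_k$ for $k=1,\dots,m$. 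Then $\big(\sum_k |u_k|^2\big)^{1/2}\equiv 1$, while for every realization of the signs $\max_{\eps}\big|\sum_k r_k\eps_k\big|=m$, so that $\E\big\|\sum_k r_k u_k\big\|_{\ell^\infty_{2^m}}=\sqrt m$. Since $J$ is a lattice homomorphism, Krivine's calculus gives $\big(\sum_k |Ju_k|^2\big)^{1/2}=J\big(\sum_k|u_k|^2\big)^{1/2}$, whence $\big\|\big(\sum_k|Ju_k|^2\big)^{1/2}\big\|\le 1$, whereas $\E\big\|\sum_k r_k Ju_k\big\|\ge \lambda^{-1}\sqrt m$. Letting $m\to\infty$ contradicts \eqref{eq:randomseries3b}, completing the argument.

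The main obstacle is precisely the reverse direction's need for \emph{sublattice} copies of $\ell^\infty_n$ rather than the merely linear copies furnished by Corollary \ref{cor:MP1}: under a bare linear embedding the lattice square function $\big(\sum_k|Ju_k|^2\big)^{1/2}$ computed in $X$ need not be comparable to $J\big(\sum_k|u_k|^2\big)^{1/2}$, and the construction collapses. The plan therefore hinges on the lattice version of Maurey--Pisier, whose preservation of $|\cdot|$ is exactly what lets Krivine's calculus transport the square function without distortion; this is the one place where the general-Banach-space reasoning of the previous section genuinely must be replaced by a lattice-theoretic input.
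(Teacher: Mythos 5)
Your proposal is correct. Note first that the paper does not actually prove these Facts: it disposes of both parts by citation, ``For (i) and (ii) see \cite[Theorem 16.11]{DJT} and \cite[Theorem 1.d.6]{LiTz}.'' What you have written is essentially a self-contained rendering of the standard arguments behind those citations. For (i), lifting the optimal $L^1$ Khintchine inequality (Szarek's constant $1/\sqrt{2}$) through Krivine's calculus is exactly the textbook route; your observation that $\E_r\bigl|\sum_n r_n a_n\bigr|$ is a finite average of absolute values of linear forms, hence a continuous positively homogeneous function to which the calculus applies and which transfers pointwise scalar inequalities (the calculus being a positive linear, indeed lattice-homomorphic, map), is sound, and the reduction to $p=1$ via $\E_r\|\cdot\|\leq(\E_r\|\cdot\|^p)^{1/p}$ is fine. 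For (ii), the forward direction via the equivalence of finite cotype with $q$-concavity for some $q<\infty$ together with Maurey's Khintchine inequality is again the standard proof (both are in \cite[Theorem 1.f.12 and Theorem 1.d.6]{LiTz}). The most valuable part of your write-up is the reverse direction, where you correctly isolate the one genuinely lattice-theoretic input: you need $\ell^\infty_n$'s as \emph{sublattices} (uniformly), not merely the linear copies produced by Corollary \ref{cor:MP1}, precisely because a bare linear embedding gives no control of $\bigl(\sum_k|Ju_k|^2\bigr)^{1/2}$ computed in $X$; the sublattice strengthening of Maurey--Pisier is part of \cite[Theorem 1.f.12]{LiTz}, and your test vectors $u_k(\eps)=m^{-1/2}\eps_k$ indexed by $\eps\in\{-1,1\}^m$ (square function identically $1$, Rademacher average deterministically $\sqrt{m}$) do yield the contradiction as claimed. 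In short: same mathematics as the sources the paper cites, but your version makes explicit, and correctly handles, the sublattice issue that the paper's citation-only treatment leaves implicit.
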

For (i) and (ii) see \cite[Theorem 16.11]{DJT} and \cite[Theorem 1.d.6]{LiTz}.

Recall that a space $X$ is {\em $2$-concave} if there is a constant $C_X$ such that for all $N\geq 1$
\[\Big(\sum_{n=1}^N \|x_n\|^2\Big)^{1/2} \leq C_X \Big\|\Big(\sum_{n=1}^N |x_n|^2\Big)^{1/2}\Big\|, \ \ x_1, \ldots, x_N\in X.\]
A space $X$ is {\em $2$-convex} if there is a constant $C_X$ such that for all $N\geq 1$
\[\Big\|\Big(\sum_{n=1}^N |x_n|^2\Big)^{1/2}\Big\|\leq C_X\Big(\sum_{n=1}^N \|x_n\|^2\Big)^{1/2}, \ \ x_1, \ldots, x_N\in X.\]

Recall the following facts from \cite[Corollary 16.9 and Theorem 16.20]{DJT} :
\begin{itemize}
\item $X$ has cotype $2$ if and only if $X$ is $2$-concave.
\item $X$ has type $2$ if and only if it has finite cotype and is 2-convex.
\end{itemize}
Note that $c_0$ is an example of a space which is $2$-convex, but does not have type $2$.

The following result is the version of Remark \ref{rem:someresults} (ii) for $\ell^2$-boundedness.
\begin{proposition}
Let $X$ and $Y$ be Banach lattices. The following are equivalent:
\begin{enumerate}[(i)]
\item Every uniformly bounded subset $\mathscr{T}\subseteq \calL(X,Y)$ is $\ell^2$-bounded.
\item $X$ is $2$-concave and $Y$ is $2$-convex.
\end{enumerate}
\end{proposition}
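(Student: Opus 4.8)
The plan is to prove the two implications separately, with the forward direction (ii) $\Rightarrow$ (i) being a routine chaining of the three defining inequalities, and the reverse direction (i) $\Rightarrow$ (ii) being extracted by testing the $\ell^2$-boundedness hypothesis against two carefully chosen families of rank-one operators. Throughout I assume $X$ and $Y$ are nonzero, the zero cases being trivial.

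For (ii) $\Rightarrow$ (i), suppose $X$ is $2$-concave with constant $C_X$ and $Y$ is $2$-convex with constant $C_Y$, and let $\mathscr{T}\subseteq\calL(X,Y)$ be uniformly bounded. Given $T_1,\dots,T_N\in\mathscr{T}$ and $x_1,\dots,x_N\in X$, I would first apply $2$-convexity of $Y$ to the vectors $T_nx_n$, then the uniform bound $\|T_nx_n\|\le\mathcal{U}(\mathscr{T})\|x_n\|$, and finally $2$-concavity of $X$, to obtain $\|(\sum_n|T_nx_n|^2)^{1/2}\|\le C_X C_Y\,\mathcal{U}(\mathscr{T})\,\|(\sum_n|x_n|^2)^{1/2}\|$. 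Hence $\mathcal{R}^2(\mathscr{T})\le C_XC_Y\,\mathcal{U}(\mathscr{T})<\infty$, which is (i).

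For (i) $\Rightarrow$ (ii) I would exploit rank-one operators $x^*\otimes y$ defined by $(x^*\otimes y)x=\langle x,x^*\rangle y$, noting $\|x^*\otimes y\|=\|x^*\|\,\|y\|$. To get $2$-convexity of $Y$, fix a unit vector $x_0\in X$ and a norming functional $x^*$ (so $\langle x_0,x^*\rangle=\|x^*\|=\|x_0\|=1$) and consider the uniformly bounded family $\mathscr{T}=\{x^*\otimes y:\|y\|\le1\}$, which by (i) has a finite $\ell^2$-bound $C$. Given nonzero $y_1,\dots,y_N\in Y$, I would apply the $\ell^2$-bound to the operators $T_n=x^*\otimes(y_n/\|y_n\|)\in\mathscr{T}$ and the test vectors $x_n=\|y_n\|\,x_0$; the point is that $T_nx_n=y_n$ exactly, while $(\sum_n|x_n|^2)^{1/2}=(\sum_n\|y_n\|^2)^{1/2}\,|x_0|$ has norm $(\sum_n\|y_n\|^2)^{1/2}$, so the $\ell^2$-bound reads $\|(\sum_n|y_n|^2)^{1/2}\|\le C\,(\sum_n\|y_n\|^2)^{1/2}$, i.e. $Y$ is $2$-convex. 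Symmetrically, to get $2$-concavity of $X$, fix a unit vector $y_0\in Y$ and the uniformly bounded family $\{x^*\otimes y_0:\|x^*\|\le1\}$ with $\ell^2$-bound $C'$; given $x_1,\dots,x_N\in X$, choose norming functionals $x_n^*$ (with $\langle x_n,x_n^*\rangle=\|x_n\|$ and $\|x_n^*\|=1$) and apply the $\ell^2$-bound to $T_n=x_n^*\otimes y_0$ and the vectors $x_n$. Here $T_nx_n=\|x_n\|\,y_0$, so the left-hand side becomes $(\sum_n\|x_n\|^2)^{1/2}$, yielding $(\sum_n\|x_n\|^2)^{1/2}\le C'\,\|(\sum_n|x_n|^2)^{1/2}\|$, i.e. $X$ is $2$-concave.

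The computations are short once the families and test vectors are chosen, so I do not expect a serious analytic obstacle. The one conceptual point worth stressing---and the only place where hypothesis (i) really does work for us---is that the $\ell^2$-bound of a single family is by definition a constant valid simultaneously for all $N$; this is precisely what lets a single application of (i) deliver the $2$-convexity (resp. $2$-concavity) constant uniformly in $N$, so that no separate uniform-boundedness-principle argument (such as Lemma~\ref{lem:uniformbdd}) is needed. Minor bookkeeping remains: handling possible zero vectors $y_n$ by discarding them, checking the elementary lattice identities $|\langle x_n,x^*\rangle\,y_n|=|\langle x_n,x^*\rangle|\,|y_n|$ and $(\sum_n\alpha_n^2|x_0|^2)^{1/2}=(\sum_n\alpha_n^2)^{1/2}|x_0|$, and noting that nonzeroness of $X$ and $Y$ guarantees the unit vectors and norming functionals used above.
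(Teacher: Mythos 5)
Your proof is correct and takes essentially the same approach as the paper's: (ii) $\Rightarrow$ (i) by chaining $2$-convexity of $Y$, the uniform bound, and $2$-concavity of $X$, and (i) $\Rightarrow$ (ii) by testing the $\ell^2$-bound against the same two rank-one families $x^*\otimes y$ with the same choices of norming functionals and test vectors $x_n = \|y_n\|\,x_0$, resp.\ $T_n = x_n^*\otimes y_0$. Your remark that a single application of (i) yields the constant uniformly in $N$, with no separate uniform-boundedness argument, matches the structure of the paper's proof as well.
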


The proof is a slight variation of the argument in \cite{ArBu}.
\begin{proof}
(ii) $\Rightarrow$ (i):  Let $\mathscr{T}\subseteq \calL(X,Y)$ be uniformly bounded. Let $T_1, \ldots, T_N\in \mathscr{T}$ and $x_1, \ldots, x_N\in X$. If follows that
\begin{align*}
 \Big\|\Big(\sum_{n=1}^N |T_n x_n|^2\Big)^{1/2} \Big\| & \leq C_Y \Big(\sum_{n=1}^N \|T_n x_n\|^2\Big)^{1/2}
\\ & \leq C_Y \mathcal{U}(\mathscr{T}) \Big(\sum_{n=1}^N \|x_n\|^2\Big)^{1/2}
\leq C_Y \mathcal{U}(\mathscr{T}) C_X \Big\|\Big(\sum_{n=1}^N |x_n|^2\Big)^{1/2}\Big\|.
\end{align*}

(i) $\Rightarrow$ (ii): First we prove that $X$ is $2$-concave. Fix $y\in Y$ with $\|y\|=1$. Let $\mathscr{T} = \{x^*\otimes y: x^*\in X^* \ \text{with $\|x^*\|\leq 1$}\}$. Then $\mathscr{T}$ is uniformly bounded and therefore it is $\ell^2$-bounded. Choose $x_1, \ldots, x_N\in X$ arbitrary. For each $n$ choose $x_n^*\in X^*$ with $\|x_n^*\|\leq 1$ such that $\lb x_n, x_n^*\rb  = \|x_n^*\|$ and let $T_n = x^*_n\otimes y$. Then each $T_n\in \mathscr{T}$ and it follows that from \eqref{eq:randomseries3a} that
\begin{align*}
\Big(\sum_{n=1}^N \|x_n\|^2 \Big)^{1/2} & =   \Big\|\Big(\sum_{n=1}^N |T_n x_n|^2\Big)^{1/2} \Big\| \leq \mathcal{R}^2(\mathscr{T}) \Big\|\Big(\sum_{n=1}^N |x_n|^2\Big)^{1/2} \Big\|
\end{align*}

Next we show that $Y$ is $2$-convex. Fix $x\in X$ and $x^*\in X^*$ of norm one and such that $\lb x, x^*\rb=1$. Consider $\mathscr{T} = \{x^*\otimes y: y\in Y \ \text{with $\|y\|\leq 1$}\}$. Then $\mathscr{T}$ is uniformly bounded and hence $\ell^2$-bounded. Choose $y_1, \ldots, y_N\in Y$ arbitrary. Let $T_n = x^*\otimes \frac{y_n}{\|y_n\|}$ and $x_n = \|y_n\| x$ for each $n$. Then $T_1, \ldots, T_N \in \mathscr{T}$ and it follows that
\begin{align*}
\Big\|\Big(\sum_{n=1}^N |y_n|^2\Big)^{1/2} \Big\| & =   \Big\|\Big(\sum_{n=1}^N |T_n x_n|^2\Big)^{1/2} \Big\| \leq \mathcal{R}^2(\mathscr{T}) \Big\|\Big(\sum_{n=1}^N |x_n|^2\Big)^{1/2} \Big\|\leq \mathcal{R}^2(\mathscr{T}) \Big(\sum_{n=1}^N \|y_n\|^2\Big)^{1/2}.
\end{align*}
\end{proof}

\begin{theorem}\label{thm:Lmain1}
Let $X$ and $Y$ be nonzero Banach lattices. The following assertions are equivalent:
\begin{enumerate}[(i)]
\item Every $\ell^2$-bounded family $\mathscr{T}\subseteq \calL(X,Y)$ is $R$-bounded.
\item Every $\ell^2$-bounded family $\mathscr{T}\subseteq \calL(X,Y)$ is $\g$-bounded.
\item $Y$ has finite cotype.
\end{enumerate}
Moreover, in this case $\mathcal{R}(\mathscr{T})  \lesssim_Y \mathcal{R}^{2}(\mathscr{T})$ and $\mathcal{R}^\gamma(\mathscr{T})\lesssim_Y \mathcal{R}^2(\mathscr{T})$.
\end{theorem}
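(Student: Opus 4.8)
The plan is to prove the cycle (iii)$\Rightarrow$(i)$\Rightarrow$(ii)$\Rightarrow$(iii), with the ``moreover'' falling out of the first implication and the real work concentrated in the last one.

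For (iii)$\Rightarrow$(i) I would just chain the two lattice square-function facts around the $\ell^2$-bound. Given an $\ell^2$-bounded family $\mathscr{T}$, operators $T_1,\dots,T_N\in\mathscr{T}$ and $x_1,\dots,x_N\in X$, apply \eqref{eq:randomseries3b} in $Y$ (legitimate since $Y$ has finite cotype), then the $\ell^2$-bound, then \eqref{eq:randomseries3a} in $X$:
\[
\Big\|\sum_{n=1}^N r_n T_n x_n\Big\|_{L^2(\O;Y)} \le C_Y \Big\|\Big(\sum_n |T_n x_n|^2\Big)^{1/2}\Big\|_Y \le C_Y\,\mathcal{R}^2(\mathscr{T}) \Big\|\Big(\sum_n |x_n|^2\Big)^{1/2}\Big\|_X \le \sqrt2\,C_Y\,\mathcal{R}^2(\mathscr{T})\Big\|\sum_n r_n x_n\Big\|_{L^2(\O;X)}.
\]
This yields $\mathcal{R}(\mathscr{T})\lesssim_Y \mathcal{R}^2(\mathscr{T})$, and since $\mathcal{R}^\g\le\mathcal{R}$ by Proposition \ref{prop:wellknown}(i) the full ``moreover'' follows. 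The implication (i)$\Rightarrow$(ii) is then immediate from the same Proposition \ref{prop:wellknown}(i): an $R$-bounded family is always $\g$-bounded.

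The heart of the matter is (ii)$\Rightarrow$(iii), which I would prove by contraposition: assuming $Y$ has no finite cotype I will build an $\ell^2$-bounded family in $\calL(X,Y)$ that is not $\g$-bounded. The lattice form of the Maurey--Pisier theorem supplies, for each $N$, a \emph{lattice homomorphism} $J_N:\ell^\infty_N\to Y$ with $\tfrac12\|u\|\le\|J_N u\|\le\|u\|$, i.e.\ a (renormed) sublattice copy of $\ell^\infty_N$ in $Y$. Fix $x_0\in X$ and a \emph{positive} norming functional $x_0^*\in X^*$ with $\|x_0\|=\|x_0^*\|=1$ and $\lb x_0,x_0^*\rb=1$. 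Let $(z_j(k))_{k,j}$ be an $N\times N$ Hadamard matrix (entries $\pm1$, orthogonal rows and columns), and set $T_j=x_0^*\otimes J_N z_j$ for $1\le j\le N$ and $\mathscr{T}_N=\{T_1,\dots,T_N\}$. I claim $\mathcal{R}^2(\mathscr{T}_N)=O(1)$ while $\mathcal{R}^\g(\mathscr{T}_N)\gtrsim\sqrt{\log N}$.

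For the $\ell^2$-bound it suffices by Lemma \ref{lem:equal} to test distinct $T_{j_i}$ against $w_1,\dots,w_l\in X$. Writing $d_i=\lb w_i,x_0^*\rb$, using that $J_N$ is a norm-$\le 1$ lattice homomorphism (so it commutes with the Krivine calculus) and that $z_{j_i}(k)^2=1$, one finds the square function equal to the constant vector $(\sum_i d_i^2)^{1/2}\mathbf 1$ in $\ell^\infty_N$, whence $\|(\sum_i|T_{j_i}w_i|^2)^{1/2}\|_Y\le(\sum_i d_i^2)^{1/2}\le\|(\sum_i|w_i|^2)^{1/2}\|_X$, the last step being the lattice Cauchy--Schwarz inequality $(\sum_i|\lb w_i,x_0^*\rb|^2)^{1/2}\le\lb(\sum_i|w_i|^2)^{1/2},x_0^*\rb$ valid for the positive functional $x_0^*$; hence $\mathcal{R}^2(\mathscr{T}_N)\le K_G$. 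For the $\g$-bound I test with $w_j=x_0$ for all $j$, so that $T_jx_0=J_Nz_j$ and $\sum_j\gamma_jT_jx_0=J_N(H\gamma)$; since the rows of $H$ are orthogonal of square-norm $N$, the coordinates $(H\gamma)_k$ are independent $N(0,N)$, giving $\|\sum_j\gamma_jT_jx_0\|_{L^2(\O;Y)}\ge\tfrac12(\E\sup_{k\le N}|(H\gamma)_k|^2)^{1/2}\gtrsim\sqrt{N\log N}$, whereas $\|\sum_j\gamma_jx_0\|_{L^2(\O;X)}=\sqrt N$. Thus $\mathcal{R}^\g(\mathscr{T}_N)\gtrsim\sqrt{\log N}$, and feeding the $\mathscr{T}_N$ into Lemma \ref{lem:uniformbdd} with $\Phi_1=\mathcal{R}^2$, $\Phi_2=\mathcal{R}^\g$ produces a single $\ell^2$-bounded, non-$\g$-bounded family, contradicting (ii).

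The step I expect to be the main obstacle is securing the \emph{sublattice} copy of $\ell^\infty_N$: Corollary \ref{cor:MP1} gives only a Banach-space embedding, while the square-function computation genuinely needs $J_N$ to intertwine $|\cdot|$ and the Krivine calculus, i.e.\ to be a lattice homomorphism, so I would invoke the lattice version of Maurey--Pisier (failure of finite cotype forces uniform $\ell^\infty_N$ sublattices). A second delicate point is that the naive diagonal choice $z_j=e_j$ fails: it produces a Gaussian sum of size only $\sqrt{\log N}$, which cannot beat the $\sqrt N$ forced by the domain normalization $\|\sum_j\gamma_jx_0\|_X$. The Hadamard matrix is precisely what inflates the Gaussian sum to $\sqrt{N\log N}$ while keeping every coordinate of the square function equal, so that the $\ell^2$-bound stays $O(1)$; together with the positivity of $x_0^*$ (making the lattice Cauchy--Schwarz inequality applicable) this is what lets the argument run over an arbitrary nonzero lattice $X$. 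As an alternative I note that the whole implication can instead be routed through Lemma \ref{lem:cotypeRbdd} and the Montgomery--Smith--Talagrand results on the cotype of operators out of $C(K)$-spaces, as indicated in the introduction.
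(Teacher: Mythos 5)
Your proposal is correct, and for the decisive implication (ii)~$\Rightarrow$~(iii) it takes a genuinely different route from the paper --- one that, as it happens, also repairs a defect in the paper's printed argument. The parts you share with the paper are exactly (iii)~$\Rightarrow$~(i): the same chain \eqref{eq:randomseries3b}--\eqref{eq:Sbdd}--\eqref{eq:randomseries3a} with the same constants, yielding the ``moreover'' part; (i)~$\Rightarrow$~(ii) via Proposition \ref{prop:wellknown}; and the final assembly of the finite families through Lemma \ref{lem:uniformbdd}. For (ii)~$\Rightarrow$~(iii), however, the paper reduces to $X=\R$ and uses the \emph{diagonal} family $T_n a = a e_n$ in $\calL(\R,\ell^\infty_N)$, asserting $\mathcal{R}^\g(\mathscr{T}_N)\ge\tfrac14(\log N)^{1/2}$ from a chain whose right-hand side is $\mathcal{R}^\g(\mathscr{T}_N)\big\|\sum_{n=1}^N\g_n 1\big\|_{L^2(\O)}$. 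Since that last norm equals $N^{1/2}$, not $1$, the chain as printed only gives the useless bound $\mathcal{R}^\g(\mathscr{T}_N)\gtrsim(\log N/N)^{1/2}$; worse, the diagonal family is in fact $\g$-bounded with constant $1$, since grouping repeated operators reduces any test to $\E\sup_{n\le N}|\g_n a_n|^2\le\sum_{n\le N}a_n^2$. Your observation that ``the naive diagonal choice $z_j=e_j$ fails'' against the $\sqrt N$ domain normalization is precisely a diagnosis of this flaw, and your Hadamard construction is the right cure: with orthogonal $\pm1$ columns $z_j$ the (grouped) square function is the constant vector $\|a\|_{\ell^2}\mathbf{1}$, so the $\ell^2$-bound stays $O(1)$, while $\sum_j\g_j z_j = H\g$ has coordinates that are independent centered Gaussians of variance $N$, so Lemma \ref{lem:Sudakov} gives $\E\sup_{k\le N}|(H\g)_k|^2\gtrsim N\log N$ and hence $\mathcal{R}^\g(\mathscr{T}_N)\gtrsim(\log N)^{1/2}$. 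Your individual steps check out: the lattice Cauchy--Schwarz inequality for the positive functional, the joint distribution of $H\g$, and the use of Lemma \ref{lem:equal} to pass from distinct operators to the general case.

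Three smaller points. First, the ``main obstacle'' you flag --- needing $J_N$ to be a lattice homomorphism --- can be bypassed: compute the $\ell^2$-bound of the Hadamard family entirely inside $\ell^\infty_N$, then compose with the merely isomorphic embedding $J_N$ of Corollary \ref{cor:MP1} and invoke Remark \ref{rem:latticecase}~\eqref{it:singleton} and \eqref{it:product}, i.e.\ Krivine's theorem $\mathcal{R}^2(\{J_N\})\le K_G\|J_N\|$ together with submultiplicativity; this is exactly the transfer mechanism the paper itself uses, so the Banach-space Maurey--Pisier theorem suffices and the lattice version (true but heavier, cf.\ \cite{LiTz}) is not needed. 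The same remark lets you compose with an \emph{arbitrary} norm-one functional on $X$, making the positivity of $x_0^*$ dispensable as well (though your positive norming pair does exist: take $x_0=|x|$ normalized and $x_0^*=|x^*|$ for a Hahn--Banach functional $x^*$ at $x_0$). Second, Hadamard matrices are only known to exist for special sizes; Sylvester's construction provides them for all $N=2^k$, which is all Lemma \ref{lem:uniformbdd} requires, but you should say so. Third, note that for the lower bound you never need an analogue of $\widehat I_N$ --- the two-sided estimate $\tfrac12\|u\|\le\|J_N u\|$ in your formulation already does the work --- which slightly streamlines even the paper's intended argument.
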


\begin{proof}
(i) $\Rightarrow$ (ii) follows from Proposition \ref{prop:wellknown}. To prove (iii) $\Rightarrow$ (i) assume $Y$ has finite cotype and let $\mathscr{T}$ be $\ell^2$-bounded. Fix $T_1, \ldots, T_N\in \mathscr{T}$ and $x_1, \ldots, x_N\in X$. It follows from \eqref{eq:randomseries3b} for $Y$ and \eqref{eq:randomseries3a} for $X$ that
\begin{align*}
\Big\|\sum_{n=1}^N r_n T_n x_n\Big\|_{L^2(\O;Y)} \leq C_Y \Big\|\Big(\sum_{n=1}^N |T_n x_n|^2\Big)^{1/2}\Big\|_Y
& \leq C_Y \mathcal{R}^2(\mathscr{T}) \Big\|\Big(\sum_{n=1}^N |x_n|^2\Big)^{1/2}\Big\|_X
\\ & \leq C_Y  \mathcal{R}^2(\mathscr{T}) \sqrt{2} \, \Big\|\sum_{n=1}^N r_n x_n\Big\|_{L^2(\O;X)}.
\end{align*}

To prove (ii) $\Rightarrow$ (iii) it suffices to consider $X = \R$. Assume (ii) holds and assume $Y$ does not have finite cotype.
By Corollary \ref{cor:MP1} for each $N\geq 1$ we can find $J_N:\ell^\infty_N\to Y$ and $\widehat{I}_N:Y\to \ell^\infty_N$ such that $\|\widehat{I}_N\|\leq 2$, $\|J_N\|\leq 1$ and $\widehat{I}_N J_N = \text{id}_{\ell^\infty_N}$.
Let $T_n:\R\to \ell^\infty_N$ be given by $T_n a = a e_n$. Then for $1\leq k_1, \ldots k_N\leq N$
\[\Big\|\Big(\sum_{n=1}^N |T_{k_n} a_n|^2\Big)^{1/2}\Big\|_{\ell^\infty_N} \leq \Big(\sum_{n=1}^N \|T_{k_n} a_n\|^2_{\ell^\infty_N}\Big)^{1/2} \leq \Big(\sum_{n=1}^N a_n^2\Big)^{1/2}.\]
Thus with $\mathscr{T}_N = \{T_n: \leq n\leq N\}$ we find $\mathcal{R}^2(\mathscr{T}_N)\leq 1$. On the other hand by \eqref{eq:estsup},
\begin{align*}
\tfrac14 (\log(N))^{1/2}  & \leq \Big(\E \sup_{1\leq n\leq N} |\gamma_n|^2\Big)^{1/2}
\leq \Big\|\sum_{n=1}^N \gamma_n T_n 1 \Big\|_{L^2(\O;\ell^\infty_N)}
\leq \mathcal{R}^\g(\mathscr{T}_N) \Big\|\sum_{n=1}^N \gamma_n 1\Big\|_{L^2(\O)}.
\end{align*}
This shows that $\mathcal{R}^\g(\mathscr{T}_N)\geq \tfrac14 (\log(N))^{1/2}$.
For $n\geq 1$, let $S_n:\R\to Y$ be given by $S_n = J_N T_n$ and let $\mathscr{S}_N = \{S_n:1\leq n\leq N\}$. Then by Remark \eqref{it:singleton} and \eqref{it:product}, $\mathcal{R}^2(\mathscr{S}_N)\leq K_G$. Moreover, by \eqref{eq:obvious}
\[\mathcal{R}^\g(\mathscr{S}) \geq \|\widehat{I}_N\|^{-1} \mathcal{R}^\g(T_n:1\leq n\leq N)\geq \tfrac18 (\log(N))^{1/2}.\]
Now by Lemma \ref{lem:uniformbdd} we can find a family $\mathscr{S}\subseteq \calL(\R,Y)$ which is $\ell^2$-bounded but not $\gamma$-bounded. Hence we have derived a contradiction.
\end{proof}

\begin{theorem}\label{thm:Lmain2}
Let $X$ and $Y$ be nonzero Banach lattices. The following assertions are equivalent:
\begin{enumerate}[(i)]
\item Every $R$-bounded family $\mathscr{T}\subseteq \calL(X,Y)$ is $\ell^2$-bounded.
\item Every $\gamma$-bounded family $\mathscr{T}\subseteq \calL(X,Y)$ is $\ell^2$-bounded.
\item $X$ has finite cotype.
\end{enumerate}
In this case, $\mathcal{R}^2(\mathscr{T})\lesssim_X \mathcal{R}(\mathscr{T})\eqsim_X \mathcal{R}^\gamma(\mathscr{T})$.
\end{theorem}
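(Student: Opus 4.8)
The plan is to establish the cycle (iii)$\Rightarrow$(ii)$\Rightarrow$(i)$\Rightarrow$(iii), proving the first two implications by direct estimates and concentrating all the difficulty in the last one.

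For (iii)$\Rightarrow$(ii) (and the quantitative statement) I would simply concatenate the Facts. Fixing $T_1,\dots,T_N\in\mathscr T$ and $x_1,\dots,x_N\in X$,
\[
\Big\|\Big(\sum_{n}|T_nx_n|^2\Big)^{\frac12}\Big\|_Y
\le \sqrt2\,\Big\|\sum_n r_nT_nx_n\Big\|_{L^2}
\le \sqrt\pi\,\Big\|\sum_n \gamma_nT_nx_n\Big\|_{L^2}
\le \sqrt\pi\,\mathcal R^\gamma(\mathscr T)\,\Big\|\sum_n \gamma_nx_n\Big\|_{L^2},
\]
using \eqref{eq:randomseries3a} in $Y$, then \eqref{eq:randomseries2a}, then $\g$-boundedness; a final use of \eqref{eq:randomseries2b} and \eqref{eq:randomseries3b} in $X$ (the only place finite cotype of $X$ enters) bounds the last average by $C_X\|(\sum_n|x_n|^2)^{1/2}\|_X$. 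The same chain with the Gaussian steps deleted and $\g$-boundedness replaced by $R$-boundedness gives (iii)$\Rightarrow$(i) with $\mathcal R^2(\mathscr T)\le \sqrt2\,C_X\,\mathcal R(\mathscr T)$; together with $\mathcal R\eqsim_X\mathcal R^\gamma$ from Theorem \ref{thm:main} this yields the asserted chain of constants. Finally (ii)$\Rightarrow$(i) is free, since every $R$-bounded family is $\g$-bounded (Proposition \ref{prop:wellknown}(i)).

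For the remaining implication I argue by contraposition, assuming $X$ has no finite cotype. Composing with $a\mapsto ay$ for a fixed positive norm-one $y\in Y$ is a lattice isometry that changes neither $\mathcal R$ nor $\mathcal R^2$, so it suffices to treat functionals, $Y=\R$; by Lemma \ref{lem:uniformbdd} (with $\Phi_1=\mathcal R$, $\Phi_2=\mathcal R^2$) it then suffices to exhibit, for each $N$, a family with $\mathcal R$ bounded and $\mathcal R^2\to\infty$. It is instructive to see first why the $\g$-version is easy: on an $\ell^\infty_N$-sublattice of $X$ the scaled coordinate functionals $T_n=\tfrac{\sqrt{\log N}}{\sqrt N}e_n^*$ have $\mathcal R^\gamma\lesssim 1$ by Lemma \ref{lem:Gaussbound}, whereas testing $x_j=e_j$ (a sublattice preserves the square function) gives $\mathcal R^2\gtrsim(\log N)^{1/2}$; patching via Lemma \ref{lem:uniformbdd} already proves (ii)$\Rightarrow$(iii). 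This family is useless for (i), however, since Proposition \ref{prop:Rboundedch} gives $\mathcal R=(\log N)^{1/2}\to\infty$.

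The heart of the matter is thus to produce an $R$-bounded family that is not $\ell^2$-bounded, and the obstacle is that diagonal families cannot do this: for coordinate functionals the configuration realising $\mathcal R^2$ may be taken supported on disjoint coordinates, where the Rademacher average equals the square function, so $\mathcal R\asymp\mathcal R^2$. My plan is to pass to adjoints. First, $\ell^2$-boundedness is self-dual: the lattice Cauchy--Schwarz inequality $|\sum_n\langle x_n,x_n^*\rangle|\le\|(\sum_n|x_n|^2)^{1/2}\|_X\,\|(\sum_n|x_n^*|^2)^{1/2}\|_{X^*}$ gives $\mathcal R^2(\mathscr S^*)\le C\,\mathcal R^2(\mathscr S)$. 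Second, a lattice without finite cotype contains $\ell^1_N$'s uniformly (Theorem \ref{thm:MP2}), hence is not $K$-convex, and non-$K$-convexity is precisely the instability of $R$-boundedness under adjoints. The plan is to use this to choose functionals $\mathscr S_N$ on (an $\ell^\infty_N$-sublattice of) $X$ with $\mathcal R(\mathscr S_N)\lesssim1$ but with the adjoint family $\mathscr S_N^*$, consisting of vectors in the cotype-$2$ space $\ell^1_N=(\ell^\infty_N)^*$, satisfying $\mathcal R(\mathscr S_N^*)\to\infty$. Applying Theorem \ref{thm:Lmain1} to the finite-cotype codomain $\ell^1_N$ then forces $\mathcal R^2(\mathscr S_N^*)\gtrsim\mathcal R(\mathscr S_N^*)\to\infty$, and self-duality returns $\mathcal R^2(\mathscr S_N)\to\infty$, contradicting (i). I expect the construction of $\mathscr S_N$ --- the quantitative witness that $R$-boundedness is not adjoint-stable on non-$K$-convex spaces --- to be the main difficulty, and it is exactly the mechanism exploited in Section \ref{sec:dual}.
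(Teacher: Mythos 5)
Your cycle (iii)$\Rightarrow$(ii)$\Rightarrow$(i) is correct and is exactly the paper's argument, and your observation that the Gaussian family of Lemma \ref{lem:Gaussbound} disposes of (ii)$\Rightarrow$(iii) but is useless for (i)$\Rightarrow$(iii) (its $R$-bound being $(\log N)^{1/2}$, Proposition \ref{prop:Rboundedch}) is on target. One local remark: Maurey--Pisier gives $\ell^\infty_N$'s as uniformly isomorphic \emph{subspaces}, not sublattices, so your parenthetical ``a sublattice preserves the square function'' needs either the lattice version of this embedding theorem or, as the paper does, no sublattice at all: the paper transfers the $\ell^2$-lower bound from $\ell^\infty_N$ to $X$ through the maps of Corollary \ref{cor:MP1} using Remark \ref{rem:latticecase} \eqref{it:singleton} and \eqref{it:product}, i.e.\ the Krivine--Grothendieck fact $\mathcal{R}^2(\{J_N\})\le K_G\|J_N\|$ for the singleton, combined with the product rule. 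This is minor and repairable.

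The genuine gap is in (i)$\Rightarrow$(iii), precisely at the point you yourself flag as the main difficulty: the family $\mathscr{S}_N$ on $\ell^\infty_N$ with $\mathcal{R}(\mathscr{S}_N)\lesssim 1$ and $\mathcal{R}(\mathscr{S}_N^*)\to\infty$. You propose to import it from ``the mechanism exploited in Section \ref{sec:dual}'', but the logical dependence in the paper runs the other way: the necessity direction of Theorem \ref{thm:duality} (non-$K$-convex $\Rightarrow$ adjoints of $R$-bounded families need not be $R$-bounded) is \emph{new in this paper} and is itself deduced from Lemma \ref{lem:keyPi} together with Theorem \ref{thm:Lmain2} (applied with domain $\R$) and Proposition \ref{prop:easyudual}. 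So your plan is circular: indeed, by your own (correct) chain --- Theorem \ref{thm:Lmain1} for the uniformly cotype-$2$ codomains $\ell^1_N$ plus self-duality of $\ell^2$-bounds --- the adjoint-instability witness is \emph{equivalent} to the witness you are trying to construct, and only the sufficiency half of the $K$-convexity duality was previously known. What is actually needed is Lemma \ref{lem:keyPi}: by Lemmas \ref{lem:cotypeRbdd} and \ref{lem:Sandpi} the task is to produce $A\in\calL(\ell^\infty_N)$ with $C_2(A)$ small but $\pi_2(A)$ large, and the paper obtains this from Jameson's example \cite{Jameson} (an $A$ with $\pi_{2,1}(A)\le 2$ and $\pi_2(A)\ge(\log N)^{1/2}$) combined with the deep Montgomery-Smith--Talagrand theorem \cite[Theorem 16.1.10]{Talagrnew} that $C_2(A)\le c\,(\log\log N)^{1/2}\,\pi_{2,1}(A)$ for operators from $C(K)$-spaces; this yields $\mathcal{R}(\mathscr{T}_N)\le 1$ and $\mathcal{R}^2(\mathscr{T}_N)\gtrsim(\log N/\log\log N)^{1/2}$ directly, after which the transfer to $X$ and Lemma \ref{lem:uniformbdd} finish as in Theorem \ref{thm:main} --- no adjoint d\'etour needed. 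Your proposal contains no substitute for these two ingredients (and your heuristic that diagonal families cannot work correctly signals that some genuinely non-diagonal, non-elementary input is required), so as written the crucial implication remains unproved.
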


To prove this result we will apply some results from the theory of absolutely summing operators (see \cite{DJT}).
Let $p,q\in [1, \infty)$. An operator $T\in \calL(X,Y)$ is called {\em $(p,q)$-summing} if there is a constant $C$ such that for all $N\geq 1$ and $x_1, \ldots, x_N\in X$ one has
\[\Big(\sum_{n=1}^N \|T x_n\|^p\Big)^{1/p} \leq C \sup\Big\{\Big(\sum_{n=1}^N |\lb x_n, x^*\rb|^q\Big)^{1/q}: \|x^*\|_{X^*} \leq 1\Big\}.\]
The infimum of all $C$ as above, is denoted by $\pi_{p,q}(T)$. An operator $T\in \calL(X,Y)$ is called {\em $p$-summing} if it is $(p,p)$-summing. In this case we write $\pi_p(T) = \pi_{p,p}(T)$.

Note that in the case $X = \ell^\infty_M$ (see \cite[page 201]{DJT}),
\[\sup\Big\{\Big(\sum_{n=1}^N |\lb x_n, x^*\rb|^q\Big)^{1/q}: \|x^*\|_{X^*} \leq 1\Big\}  = \sup_{1\leq m\leq M}\Big(\sum_{n=1}^N |x_{n,m}|^q\Big)^{1/q}.\]

We provide a connection between $\ell^2$-boundedness and $2$-summing operators, which is similar as in Lemma \ref{lem:cotypeRbdd}.
\begin{lemma}\label{lem:Sandpi}
Let $T_1, \ldots, T_N\in\calL(\ell^\infty_M, \R)$ and let $\mathscr{T} =\{T_n: 1\leq n\leq N\}$. Let $A:\ell^\infty_M\to \ell^\infty_N$ be given by $A x = (T_n x)_{n=1}^N$. Then $\mathcal{R}^2(\mathscr{T}) = \pi_2(A)$.
\end{lemma}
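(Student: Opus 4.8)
The plan is to mirror the proof of Lemma~\ref{lem:cotypeRbdd}, replacing the Rademacher averages by the weak $\ell^2$-norm that appears in the definition of $\pi_2$. The starting observation is that for $X = \ell^\infty_M$ the supremum defining $(2,2)$-summing collapses to a coordinatewise expression: by the displayed identity on \cite[page 201]{DJT},
\[
\sup_{\|x^*\|\leq 1}\Big(\sum_{i=1}^k |\lb x_i, x^*\rb|^2\Big)^{1/2} = \sup_{1\leq m\leq M}\Big(\sum_{i=1}^k |x_{i,m}|^2\Big)^{1/2} = \Big\|\Big(\sum_{i=1}^k |x_i|^2\Big)^{1/2}\Big\|_{\ell^\infty_M}.
\]
In other words, the right-hand side of the defining inequality for $\pi_2(A)$ is literally the right-hand side of the $\ell^2$-boundedness inequality for $\mathscr{T}$. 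Since the codomain is $Y = \R$, the lattice square function on the left of the $\ell^2$-bound is just the scalar $(\sum_i |S_i x_i|^2)^{1/2}$; and since $\|A x_i\|_{\ell^\infty_N} = \max_{1\leq n\leq N} |T_n x_i|$, both $\mathcal{R}^2(\mathscr{T})$ and $\pi_2(A)$ are now comparisons against this same right-hand side.

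For the inequality $\mathcal{R}^2(\mathscr{T}) \leq \pi_2(A)$, I would take arbitrary $S_1, \ldots, S_k \in \mathscr{T}$ and $x_1, \ldots, x_k \in \ell^\infty_M$; since each $S_i = T_{n_i}$ for some $n_i$, we have $|S_i x_i| \leq \max_{1\leq n\leq N} |T_n x_i| = \|A x_i\|_{\ell^\infty_N}$, hence
\[
\sum_{i=1}^k |S_i x_i|^2 \leq \sum_{i=1}^k \|A x_i\|_{\ell^\infty_N}^2 \leq \pi_2(A)^2 \Big\|\Big(\sum_{i=1}^k |x_i|^2\Big)^{1/2}\Big\|_{\ell^\infty_M}^2,
\]
and taking square roots yields $\mathcal{R}^2(\mathscr{T}) \leq \pi_2(A)$. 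For the reverse inequality, given $x_1, \ldots, x_k$ I would instead choose $S_i \in \mathscr{T}$ realizing the maximum, so that $|S_i x_i| = \|A x_i\|_{\ell^\infty_N}$; then
\[
\sum_{i=1}^k \|A x_i\|_{\ell^\infty_N}^2 = \sum_{i=1}^k |S_i x_i|^2 \leq \mathcal{R}^2(\mathscr{T})^2 \Big\|\Big(\sum_{i=1}^k |x_i|^2\Big)^{1/2}\Big\|_{\ell^\infty_M}^2,
\]
and rewriting the right-hand side via the identity of the first step gives $\pi_2(A) \leq \mathcal{R}^2(\mathscr{T})$.

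There is essentially no serious obstacle: the argument is a direct transcription of Lemma~\ref{lem:cotypeRbdd}, with the $2$-summing weak norm in place of the Rademacher average. The only two points requiring care are the identification in the first step of the $2$-summing weak $\ell^2$-norm on $\ell^\infty_M$ with the lattice square-function norm, and the fact that $\ell^2$-boundedness may be tested on \emph{repeated} operators, which is exactly what legitimises the optimal choice of $S_i$ used in the second inequality.
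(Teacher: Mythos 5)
Your proof is correct and takes essentially the same route as the paper's: both directions rest on the pointwise bound $|S_i x_i|\leq \|Ax_i\|_{\ell^\infty_N}$, with equality forced by choosing for each $x_i$ a maximizing $S_i\in\mathscr{T}$, combined with the identification (via the displayed identity from \cite[page 201]{DJT}) of the weak $\ell^2$-norm on $\ell^\infty_M$ with the lattice square-function norm $\big\|\big(\sum_i |x_i|^2\big)^{1/2}\big\|_{\ell^\infty_M}$. Your closing observation that the definition of $\ell^2$-boundedness allows repeated operators is precisely what legitimises that optimal choice, just as in the paper.
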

\begin{proof}
Let $S_1, \ldots, S_k\in \mathscr{T}$ and $x_1, \ldots, x_k\in \ell^\infty_M$. Then
\begin{align*}
\sum_{i=1}^k |S_i x_i|^2 & \leq \sum_{i=1}^k \|(T_n x_i)_{n=1}^N \|_{\ell^\infty_N}^2  = \sum_{i=1}^k \|A x_i\|^2_{\ell^\infty_N} \\ & \leq \pi_2(A)^2 \sup_{1\leq m\leq M}\sum_{i=1}^k |x_{i,m}|^2 = \pi_2(A)^2 \Big\|\Big(\sum_{i=1}^k |x_i|^2\Big)^{1/2} \Big\|_{\ell^\infty_M}^2,
\end{align*}
and this shows that $\mathcal{R}^2(\mathscr{T}) \leq  \pi_2(A)$. Conversely, for $x_1, \ldots, x_k\in \ell^\infty_M$ choose $S_1, \ldots, S_k\in \mathscr{T}$ such that $\max_{1\leq n\leq N}|T_n x_i| = |S_i x_i|$. Then
\begin{align*}
\sum_{i=1}^k \|A x_i\|^2_{\ell^\infty_N} = \sum_{i=1}^k \|(T_n x_i)_{n=1}^N \|_{\ell^\infty_N}^2 = \sum_{i=1}^k |S_i x_i|^2\leq \mathcal{R}^2(\mathscr{T}) \Big\|\Big(\sum_{i=1}^k |x_i|^2\Big)^{1/2}\Big\|_{\ell^\infty_M}^2
\end{align*}
from which the result clearly follows.
\end{proof}

The next result is based on an example in \cite{Jameson} and a deep result in \cite{Talagrnew}.
\begin{lemma}\label{lem:keyPi}
Let $N\geq 3$. There exists a family $\mathscr{T} = \{T_1, \ldots, T_N\}\subseteq \calL(\ell^\infty_N,\R)$ such that
\begin{equation}\label{eq:constructTell1}
\mathcal{R}(\mathscr{T})\leq 1 \ \ \text{and} \ \ \mathcal{R}^2(\mathscr{T})\gtrsim \Big(\frac{\log(N)}{\log(\log(N))}\Big)^{1/2}.
\end{equation}
\end{lemma}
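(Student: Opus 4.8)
The plan is to convert the statement, via the two preceding lemmas, into an assertion about the summing norms of a single operator $A\colon\ell^\infty_N\to\ell^\infty_N$, and then to combine a combinatorial example of Jameson with the deep cotype theorem for operators on $C(K)$-spaces.

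First I would record the reduction. For an operator $A\colon\ell^\infty_N\to\ell^\infty_N$ write $Ax=(T_nx)_{n=1}^N$ with $T_n=e_n^*A\in\calL(\ell^\infty_N,\R)$, and set $\mathscr{T}=\{T_1,\dots,T_N\}$. Then Lemma \ref{lem:cotypeRbdd} gives $\mathcal{R}(\mathscr{T})=C_2(A)$ and Lemma \ref{lem:Sandpi} gives $\mathcal{R}^2(\mathscr{T})=\pi_2(A)$. Hence it suffices to construct $A$ with
\[
C_2(A)\lesssim 1\qquad\text{and}\qquad \pi_2(A)\gtrsim\Big(\tfrac{\log N}{\log\log N}\Big)^{1/2};
\]
dividing $A$ by the absolute constant bounding $C_2(A)$ then forces $\mathcal{R}(\mathscr{T})\le 1$ while preserving the lower bound on $\mathcal{R}^2(\mathscr{T})$ up to a constant.

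To bound $C_2(A)$ I would pass to the $(2,1)$-summing norm. Since $\sup_m|\sum_i r_i x_{i,m}|\le\sup_m\sum_i|x_{i,m}|$ pointwise in the signs, one has $\E\|\sum_i r_ix_i\|_{\ell^\infty_N}\le\sup_m\sum_i|x_{i,m}|$, and the right-hand side is exactly the $(2,1)$-weak norm of $(x_i)$ in $\ell^\infty_N$; thus $\pi_{2,1}(A)\le C_2(A)$ for every operator out of $\ell^\infty_N$. The substance is the reverse inequality, and this is where the theorem of Montgomery--Smith and Talagrand on the cotype of operators from $C(K)$-spaces enters: for $A$ defined on such a space it yields $C_2(A)\lesssim\pi_{2,1}(A)$, so that $C_2(A)\eqsim\pi_{2,1}(A)$. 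It therefore suffices to find $A\colon\ell^\infty_N\to\ell^\infty_N$ whose $(2,1)$-summing norm stays bounded while its $2$-summing norm grows like $(\log N/\log\log N)^{1/2}$, i.e. an operator exhibiting a gap of this order between $\pi_2$ and $\pi_{2,1}$.

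The construction of such an $A$ is the main point, and for it I would follow Jameson: his example produces, for each $N\ge 3$, a matrix viewed as a map $\ell^\infty_N\to\ell^\infty_N$ with $\pi_{2,1}(A)\lesssim 1$ and $\pi_2(A)\gtrsim(\log N/\log\log N)^{1/2}$, the asymptotic factor being governed by the extremal parameters of the underlying set system. Feeding this $A$ through the reduction gives $\mathscr{T}=\{T_1,\dots,T_N\}$ with $\mathcal{R}(\mathscr{T})=C_2(A)\eqsim\pi_{2,1}(A)\lesssim 1$ and $\mathcal{R}^2(\mathscr{T})=\pi_2(A)\gtrsim(\log N/\log\log N)^{1/2}$, which after the rescaling of the first step is exactly \eqref{eq:constructTell1}. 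I expect the genuinely hard inputs to be Talagrand's cotype theorem, used as a black box from \cite{Talagrnew}, and the verification that Jameson's matrix attains the stated lower bound for $\pi_2$; the two transfer steps are routine once Lemmas \ref{lem:cotypeRbdd} and \ref{lem:Sandpi} are in hand.
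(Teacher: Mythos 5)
Your overall architecture coincides with the paper's: reduce to a single operator $A\colon\ell^\infty_N\to\ell^\infty_N$ via Lemmas \ref{lem:cotypeRbdd} and \ref{lem:Sandpi}, take $A$ from Jameson, control $C_2(A)$ through the Montgomery-Smith--Talagrand cotype theorem, and rescale by homogeneity. However, the quantitative form in which you invoke the two black boxes is wrong, and your central step $C_2(A)\eqsim\pi_{2,1}(A)$ fails. The theorem you need, \cite[Theorem 16.1.10]{Talagrnew}, does \emph{not} give $C_2(A)\lesssim\pi_{2,1}(A)$ with an absolute constant for operators from $\ell^\infty_N$: it gives $C_2(A)\le c\,(\log\log N)^{1/2}\,\pi_{2,1}(A)$, and the iterated-logarithm factor is known to be unavoidable at the exponent $q=2$ (its necessity is one of the deep results of Montgomery-Smith and Talagrand summarized in \cite[Chapter 16]{Talagrnew}; only for $q>2$ does $(q,1)$-summing from $C(K)$ imply cotype $q$ with a dimension-free constant). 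Correspondingly, you misquote Jameson: \cite[Examples 3.29 and 14.6]{Jameson} yields $\pi_{2,1}(A)\le 2$ together with the \emph{stronger} bound $\pi_2(A)\ge(\log N)^{1/2}$, not $(\log N/\log\log N)^{1/2}$. In the paper the $\log\log$ loss enters through the cotype theorem, not through the example.

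These two inaccuracies do not cancel as written. With the correct form of the cotype theorem your normalization divides $A$ by $C_2(A)\lesssim(\log\log N)^{1/2}$, so your weakened lower bound $\pi_2(A)\gtrsim(\log N/\log\log N)^{1/2}$ only yields $\mathcal{R}^2(\mathscr{T})\gtrsim(\log N)^{1/2}/\log\log N$, which falls short of \eqref{eq:constructTell1}. The repair is immediate and reproduces the paper's proof verbatim: quote both inputs at their true strength, obtaining $\mathcal{R}(\mathscr{T})=C_2(A)\le 2c\,(\log\log N)^{1/2}$ and $\mathcal{R}^2(\mathscr{T})=\pi_2(A)\ge(\log N)^{1/2}$, and then rescale. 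Your side remark $\pi_{2,1}(A)\le C_2(A)$ is correct (and holds for operators from any Banach space, since pointwise in the signs one has $\bigl\|\sum_i \epsilon_i x_i\bigr\|\le\sup_{\|x^*\|\le1}\sum_i|\langle x_i,x^*\rangle|$), but it plays no role once the false reverse inequality is dropped.
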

\begin{proof}
It follows from \cite[Examples 3.29 and 14.6]{Jameson} that there is an operator $A\in \calL(\ell^\infty_N)$ such that $\pi_2(A) \geq (\log(N))^{1/2}$ and $\pi_{2,1}(A)\leq 2$. Let $T_n:\ell^\infty_N\to \R$ be given by $T_n x = (A x)_n$ for $1\leq n\leq N$ and $\mathscr{T} = \{T_1,\ldots, T_N\}$. Then from Lemma \ref{lem:Sandpi} that $\mathcal{R}^2(\mathscr{T}) = \pi_2(A) \geq (\log(N))^{1/2}$. On the other hand, from Lemma \ref{lem:cotypeRbdd} and \cite[Theorem 16.1.10]{Talagrnew} we obtain
\[\mathcal{R}(\mathscr{T}) = C_2(A) \leq c (\log(\log(N)))^{1/2} \pi_{2,1}(A)\leq 2 c (\log(\log(N)))^{1/2},\]
where $c$ is a numerical constant. Now the required assertion follows by homogeneity.
\end{proof}

\begin{proof}[Proof of Theorem \ref{thm:Lmain2}]
(iii) $\Rightarrow$ (ii): Assume $X$ has finite cotype. Let $\mathscr{T}\subset\calL(X,Y)$ be $\gamma$-bounded.
Then by \eqref{eq:randomseries2a} and \eqref{eq:randomseries3a} for $Y$, and \eqref{eq:randomseries2b} and \eqref{eq:randomseries3b} for $X$, the result follows.

(ii) $\Rightarrow$ (i): Since $R$-boundedness implies $\gamma$-boundedness by Proposition \ref{prop:wellknown}, the result follows.

(i) $\Rightarrow$ (iii):
Assume that every $R$-bounded family $\mathscr{T}\subseteq \calL(X,\R)$ is $\ell^2$-bounded. Assuming that $X$ does not have finite cotype, one can use the same construction as in Theorem \ref{thm:main} but this time applying Lemma \ref{lem:keyPi} instead of Lemma \ref{lem:Gaussbound}. Here one also needs to apply Remark \ref{rem:latticecase} in a similar way as in Theorem \ref{thm:Lmain1}.
\end{proof}

\section{Duality and $R$-boundedness\label{sec:dual}}

In this final section we consider duality of $R$-boundedness, $\gamma$-boundedness and $\ell^2$-boundedness.
For a family $\mathscr{T}\subseteq \calL(X,Y)$ we write $\mathscr{T}^* = \{T^*:T\in \calL(X,Y)\}$.

For $\ell^2$-boundedness, there is a duality result which does not depend on the geometry of the spaces.
\begin{proposition}\label{prop:easyudual}
Let $X$ and $Y$ be Banach lattices. A family $\mathscr{T}\subseteq\calL(X,Y)$ is $\ell^2$-bounded if and only if $\mathscr{T}^*$ is $\ell^2$-bounded.
In this case $\mathcal{R}^2(\mathscr{T}) = \mathcal{R}^2(\mathscr{T}^*)$.
\end{proposition}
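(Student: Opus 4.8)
The plan is to prove the duality statement $\mathcal{R}^2(\mathscr{T}) = \mathcal{R}^2(\mathscr{T}^*)$ by symmetry: by the bipolar-type fact that $X$ embeds isometrically into $X^{**}$ as a sublattice and $T = (T^*)^*|_X$, it suffices to prove just one inequality, say $\mathcal{R}^2(\mathscr{T}^*) \leq \mathcal{R}^2(\mathscr{T})$, and then apply the same inequality to $\mathscr{T}^*$ to recover the reverse. So the real content is a single inequality relating the $\ell^2$-bound of adjoints to that of the original family.

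\medskip

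\noindent To establish $\mathcal{R}^2(\mathscr{T}^*) \leq \mathcal{R}^2(\mathscr{T})$, I would fix distinct $T_1, \dots, T_N \in \mathscr{T}$ and functionals $y_1^*, \dots, y_N^* \in Y^*$, and estimate $\big\|\big(\sum_n |T_n^* y_n^*|^2\big)^{1/2}\big\|_{X^*}$. The key tool is the lattice duality for square functions: for a Banach lattice $Z$ one has the isometric (or near-isometric) identity
\begin{equation*}
\Big\|\Big(\sum_{n=1}^N |z_n^*|^2\Big)^{1/2}\Big\|_{Z^*} = \sup\Big\{\sum_{n=1}^N \lb z_n, z_n^*\rb : z_n \in Z, \ \Big\|\Big(\sum_{n=1}^N |z_n|^2\Big)^{1/2}\Big\|_Z \leq 1\Big\},
\end{equation*}
which expresses the $\ell^2(Z^*)$-valued norm as a dual pairing against the $\ell^2(Z)$-valued unit ball. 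Applying this with $Z = X$ and $z_n^* = T_n^* y_n^*$, I would pick near-optimal testing elements $x_1, \dots, x_N \in X$ with $\big\|\big(\sum_n |x_n|^2\big)^{1/2}\big\|_X \leq 1$, rewrite the pairing as $\sum_n \lb T_n x_n, y_n^* \rb$, and then bound it via Cauchy--Schwarz in the pairing between $\ell^2(Y)$ and $\ell^2(Y^*)$. This yields
\begin{equation*}
\sum_{n=1}^N \lb T_n x_n, y_n^*\rb \leq \Big\|\Big(\sum_{n=1}^N |T_n x_n|^2\Big)^{1/2}\Big\|_Y \cdot \Big\|\Big(\sum_{n=1}^N |y_n^*|^2\Big)^{1/2}\Big\|_{Y^*},
\end{equation*}
and the first factor is controlled by $\mathcal{R}^2(\mathscr{T})$ times $\big\|\big(\sum_n |x_n|^2\big)^{1/2}\big\|_X \leq 1$. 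Taking suprema over the $x_n$ and over $\big\|\big(\sum_n |y_n^*|^2\big)^{1/2}\big\|_{Y^*} \leq 1$ gives exactly $\mathcal{R}^2(\mathscr{T}^*) \leq \mathcal{R}^2(\mathscr{T})$, where the reduction to distinct operators is justified by Lemma \ref{lem:equal}.

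\medskip

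\noindent I expect the main obstacle to be the precise form of the lattice duality for the $\ell^2$-valued norm, i.e.\ verifying that the Krivine square function $\big(\sum_n |z_n|^2\big)^{1/2}$ in $Z$ dualizes correctly to the one in $Z^*$. For general Banach lattices one must be careful that $Z^*$ carries the dual lattice structure and that Krivine's functional calculus is compatible with duality; the cleanest route is to invoke the identification of $Z(\ell^2)^*$ with $Z^*(\ell^2)$ (isometrically, when $Z$ has order-continuous norm, and with an appropriate interpretation otherwise) rather than arguing elementwise. Once that duality identity is in hand, the Cauchy--Schwarz step and the supremum manipulations are routine, and the symmetry argument closes the proof with equal constants, so that no hypothesis on type, cotype, or $K$-convexity is needed --- which is precisely what makes this proposition simpler than the adjoint results for $R$-boundedness in the remainder of the section.
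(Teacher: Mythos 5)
Your proof is in substance the paper's: the paper disposes of this proposition in one line by citing the isometric identification $E(\ell^2_N)^* = E^*(\ell^2_N)$ for arbitrary Banach lattices $E$ (Lindenstrauss--Tzafriri, p.~47), and your displayed duality identity for square functions is exactly that fact written in coordinates, so your pairing-and-Cauchy--Schwarz computation is a correct unpacking of the same argument. Two corrections, though. First, your worry about order continuity is moot: the definition of $\ell^2$-boundedness only ever involves finitely many vectors, so you need the identification $Z(\ell^2_N)^* = Z^*(\ell^2_N)$ for \emph{finite} $N$, which is isometric for every Banach lattice with no hypothesis whatsoever; it is the infinite-sequence space $Z(\ell^2)$ whose duality requires order continuity, and you never need it. Second, the restriction to distinct $T_1,\ldots,T_N$ together with Lemma~\ref{lem:equal} is both unnecessary and harmful: Lemma~\ref{lem:equal} upgrades a distinct-operator estimate to a full $\ell^2$-bound only at the cost of the Grothendieck constant, so as written you would conclude merely $\mathcal{R}^2(\mathscr{T}^*) \leq K_G\, \mathcal{R}^2(\mathscr{T})$, losing the claimed exact equality. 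Since \eqref{eq:Sbdd} already allows repeated operators, simply run your argument for arbitrary finite tuples from $\mathscr{T}$ --- nothing in the pairing step uses distinctness --- and the equality survives. Finally, your bidual detour is legitimate (the canonical embedding $X\to X^{**}$ is a lattice isometry compatible with Krivine's calculus), but avoidable: under the identification, the adjoint of the diagonal operator $(x_n)\mapsto (T_n x_n)$ from $X(\ell^2_N)$ to $Y(\ell^2_N)$ is precisely $(y_n^*)\mapsto (T_n^* y_n^*)$, and the equality of an operator's norm with that of its adjoint yields both inequalities simultaneously.
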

\begin{proof}
This easily follows from the fact that for Banach lattices $E$, one has $E(\ell^2_N)^* = E^*(\ell^2_N)$ isometrically (see \cite[p.\ 47]{LiTz}).
\end{proof}

Recall from \cite{dePaRick} and \cite{HKK} that a family $\mathscr{T}\subseteq \calL(X,Y)$ is $R$-bounded if and only if $\mathscr{T}^{**}\subseteq \calL(X^{**},Y^{**})$ is $R$-bounded. The same holds for $\gamma$-boundedness.
It is well-known that for spaces with nontrivial type (or equivalently $K$-convex with respect to the Rademacher system by Pisier's theorem, see \cite[Chapter 13]{DJT}), $R$-boundedness of $\mathscr{T}\subseteq \calL(X,Y)$ implies $R$-boundedness of $\mathscr{T}^*\subseteq \calL(Y^*,X^*)$ (see \cite[Lemma 3.1]{KWcalc}). By \cite[Corollary 2.8]{PisConv} the same method can be used to obtain duality for $\gamma$-boundedness.
The following result shows that the geometric limitation of nontrivial type is also necessary:

\begin{theorem}\label{thm:duality}
Let $X$ and $Y$ be Banach spaces. The following are equivalent:
\begin{enumerate}[(i)]
\item For every $R$-bounded family $\mathscr{T}\subseteq \calL(X,Y)$, the family $\mathscr{T}^*\subseteq \calL(Y^*,X^*)$ is $R$-bounded.
\item For every $R$-bounded family $\mathscr{T}^*\subseteq \calL(X^*,Y^*)$, the family $\mathscr{T}\subseteq \calL(Y,X)$ is $R$-bounded.
\item For every $\gamma$-bounded family $\mathscr{T}\subseteq \calL(X,Y)$, the family $\mathscr{T}^*\subseteq \calL(Y^*,X^*)$ is $\gamma$-bounded.
\item For every $\gamma$-bounded family $\mathscr{T}^*\subseteq \calL(X^*,Y^*)$, the family $\mathscr{T}\subseteq \calL(Y,X)$ is $\gamma$-bounded.
\item $X$ has nontrivial type.
\end{enumerate}
In this case for every $\mathscr{T}\subseteq \calL(X,Y)$,
\[\mathcal{R}^{\gamma}(\mathscr{T}) \eqsim_X \mathcal{R}(\mathscr{T}) \eqsim_X \mathcal{R}(\mathscr{T}^*) \eqsim_X \mathcal{R}^{\gamma}(\mathscr{T}^*).\]
\end{theorem}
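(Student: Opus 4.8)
The plan is to deduce (i)--(iv) from (v) using the known duality results, and then to refute each of (i)--(iv) when $X$ fails to have nontrivial type. First assume (v). Since nontrivial type coincides with $K$-convexity and passes to the dual (Theorem \ref{thm:MP2}), both $X$ and $X^*$ are $K$-convex. The cited results \cite{KWcalc} and \cite{PisConv} then give (i) and (iii) directly, with $\mathcal{R}(\mathscr{T}^*)\lesssim_X\mathcal{R}(\mathscr{T})$ and its $\g$-analogue. For (ii) and (iv) I apply these to $X^*$ and combine them with the invariance of $R$- and $\g$-boundedness under bidualisation (\cite{dePaRick}, \cite{HKK}): if $\mathscr{T}\subseteq\calL(Y,X)$ has $\mathscr{T}^*$ bounded, then $\mathscr{T}^{**}$ is bounded, hence so is $\mathscr{T}$. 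Finally, nontrivial type forces finite cotype, so Theorem \ref{thm:main} yields $\mathcal{R}\eqsim_X\mathcal{R}^\g$; together with the two-sided constants from (i)--(iv) this gives the displayed chain $\mathcal{R}^\g(\mathscr{T})\eqsim_X\mathcal{R}(\mathscr{T})\eqsim_X\mathcal{R}(\mathscr{T}^*)\eqsim_X\mathcal{R}^\g(\mathscr{T}^*)$.

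For the converse I show that if (v) fails then each of (i)--(iv) fails. Suppose $X$ has trivial type; by Theorem \ref{thm:MP2} both $X$ and $X^*$ contain $\ell^1_N$'s $\lambda$-uniformly, so fix embeddings $J_N:\ell^1_N\to X$ and $\tilde J_N:\ell^1_N\to X^*$ bounded below by $\lambda^{-1}$. Dualising $J_N$ exactly as in Corollary \ref{cor:MP1} produces a weak$^*$-continuous quotient map $J_N^*:X^*\to\ell^\infty_N$ of norm $\le 1$ admitting a norm-$\le\lambda$ right inverse; thus $X^*$ contains $\ell^\infty_N$ uniformly complemented and in particular lacks finite cotype. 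Dually, $R:=\tilde J_N^*|_X:X\to\ell^\infty_N$ has norm $\le 1$ and $R^*=\tilde J_N$. I first build counterexamples with $Y=\R$; a general nonzero $Y$ is recovered by tensoring with a norming pair $y_0\in Y$, $y_0^*\in Y^*$, which alters all four bounds only by absolute factors.

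The $\g$-statements (iii) and (iv) are refuted with coordinate functionals. Let $\delta_1,\dots,\delta_N$ be the coordinate functionals on $\ell^\infty_N$; by Lemma \ref{lem:Gaussbound} (whose upper bound applies verbatim to $\ell^\infty_N$) their $\g$-bound is $\le 4(N/\log N)^{1/2}$, whereas in $\ell^1_N$ the family $\{a\mapsto ae_n\}$ has $\g$-bound $\gtrsim N^{1/2}$. For (iii) put $x_n^*:=\delta_n\circ R=\tilde J_N e_n$ and $\mathscr{T}=\{x_n^*\}\subseteq\calL(X,\R)$: the ideal property \eqref{eq:obvious} gives $\mathcal{R}^\g(\mathscr{T})\lesssim(N/\log N)^{1/2}$, while $\mathscr{T}^*=\{a\mapsto a\,\tilde J_N e_n\}$ obeys $\mathcal{R}^\g(\mathscr{T}^*)\ge\lambda^{-1}\mathcal{R}^\g(\{a\mapsto ae_n\})\gtrsim N^{1/2}$ since $\tilde J_N$ is bounded below. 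For (iv) use $u_n:=J_N e_n\in X$ and $\mathscr{U}=\{a\mapsto au_n\}\subseteq\calL(\R,X)$: here $\mathscr{U}^*=\{\delta_n\circ J_N^*\}$ is $\g$-bounded with the same estimate, whereas $\mathcal{R}^\g(\mathscr{U})\gtrsim N^{1/2}$ because the $u_n$ span $\ell^1_N$. In each case, after normalising, Lemma \ref{lem:uniformbdd} applied to $\Phi_1(\mathscr{A})=\mathcal{R}^\g(\mathscr{A})$ and $\Phi_2(\mathscr{A})=\mathcal{R}^\g(\mathscr{A}^*)$ glues the finite-$N$ families into one $\g$-bounded family whose adjoint family is not $\g$-bounded.

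The $R$-statements (i) and (ii) are the hard part, and here lies the main obstacle: the coordinate-functional example gives no gap, since $\mathcal{R}(\{\delta_n\})=N^{1/2}=\mathcal{R}(\{a\mapsto ae_n\})$. The required logarithmic separation must be imported through $\ell^2$-boundedness. Take the Jameson family $\{T_1,\dots,T_N\}\subseteq\calL(\ell^\infty_N,\R)$ of Lemma \ref{lem:keyPi}, with $\mathcal{R}(\{T_n\})\lesssim(\log\log N)^{1/2}$ but $\mathcal{R}^2(\{T_n\})=\pi_2(A)\ge(\log N)^{1/2}$. By the self-duality of $\ell^2$-boundedness (Proposition \ref{prop:easyudual}) and Theorem \ref{thm:Lmain2} with domain $\R$, the adjoint family $\{a\mapsto aT_n\}\subseteq\calL(\R,\ell^1_N)$ satisfies $\mathcal{R}(\{a\mapsto aT_n\})\gtrsim\mathcal{R}^2=\pi_2(A)\ge(\log N)^{1/2}$. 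Transplanting as before---$x_n^*:=T_n\circ R$ for (i) and $x_n:=J_N T_n$ for (ii)---produces families that are $R$-bounded with bound $\lesssim(\log\log N)^{1/2}$ but whose adjoints have $R$-bound $\gtrsim(\log N)^{1/2}$; normalising and gluing with Lemma \ref{lem:uniformbdd} refutes (i) and (ii). The delicate point is precisely this routing: one separates through the self-dual $\ell^2$-bound and the deep Montgomery-Smith--Talagrand estimate, while tracking weak$^*$-continuity carefully (so that the transplanted families are genuine adjoint families factoring through $\ell^1_N\embed X^*$, respectively $\ell^1_N\embed X$) in order that the gap survives the factorisations.
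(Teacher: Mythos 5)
Your proposal is correct in substance and, on the decisive implications, takes exactly the paper's route: to refute (i) and (ii) when $X$ has trivial type you pass, as the paper does, through the Jameson family of Lemma \ref{lem:keyPi}, the self-duality of $\ell^2$-boundedness (Proposition \ref{prop:easyudual}) and Theorem \ref{thm:Lmain2} with scalar domain to obtain $\mathcal{R}(\mathscr{T}_N^*)\gtrsim \mathcal{R}^2(\mathscr{T}_N^*)=\mathcal{R}^2(\mathscr{T}_N)\gtrsim (\log N/\log\log N)^{1/2}$, then transplant via an embedding $\ell^1_N\to X^*$ (resp.\ $\ell^1_N\to X$) using $(J_N^*|_X)^*=J_N$, and glue with Lemma \ref{lem:uniformbdd}; this is the chain \eqref{eq:RbounddualcN} verbatim. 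Where you genuinely deviate is in (iii) and (iv): the paper disposes of them in one line by reusing the same counterexample, observing that the adjoint families have domain $\R$, which has finite cotype, so by Proposition \ref{prop:wellknown} failure of $R$-boundedness already implies failure of $\g$-boundedness; you instead build a second, self-contained counterexample from coordinate functionals via Lemma \ref{lem:Gaussbound} ($\g$-bound $\lesssim (N/\log N)^{1/2}$ for $\{\delta_n\circ R\}$ against $\gtrsim N^{1/2}$ for the adjoints landing in $\ell^1_N$). Your variant is valid and has the small merit of making the $\g$-implications independent of the Montgomery-Smith--Talagrand machinery, at the cost of a construction the paper shows is unnecessary.

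One sentence of yours is false, though fortunately unused: the claim that $J_N^*:X^*\to\ell^\infty_N$ admits a linear right inverse of norm $\leq\lambda$, so that ``$X^*$ contains $\ell^\infty_N$ uniformly complemented and in particular lacks finite cotype.'' Take $X=c_0$: it contains $\ell^1_N$'s almost isometrically, hence has trivial type, yet $X^*=\ell^1$ has cotype $2$; a uniformly bounded linear right inverse of $J_N^*$ would embed the $\ell^\infty_N$'s uniformly into $\ell^1$, which is impossible. (The Hahn--Banach extensions of the biorthogonal functionals cannot be chosen linearly in the $\ell^\infty_N$-variable with control on the supremum norm, which is where the construction suggested by Corollary \ref{cor:MP1} breaks; that corollary goes in the opposite direction, from a copy of $\ell^\infty_N$ in $X$.) Your actual constructions never invoke this claim: for (i) and (iii) you only need $\tilde J_N:\ell^1_N\to X^*$, supplied by Theorem \ref{thm:MP2} since $X^*$ also has trivial type, together with $R=\tilde J_N^*|_X$ and $R^*=\tilde J_N$; for (ii) and (iv) you only need $\|J_N^*\|\leq \|J_N\|$ and the lower bound of $J_N$. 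So deleting that sentence leaves the proof intact. The remaining ingredients --- bidualization for (v)$\Rightarrow$(ii),(iv), the reduction to $Y=\R$ via a norming pair $(y_0,y_0^*)$, and closing the four-term equivalence of constants through the two dual estimates plus $\mathcal{R}\eqsim_X\mathcal{R}^\g$ on domain $X$ --- match the paper or are routine.
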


\begin{proof}
(v) $\Rightarrow$ (i) and (v) $\Rightarrow$ (iii): See the references before Theorem \ref{thm:duality}.

(i) $\Rightarrow$ (v): Assume (i) and assume $X$ does not have nontrivial type. From Theorem \ref{thm:MP2} it follows that for every $N\geq 1$, there exists $J_N:\ell^1_N\to X^*$ such that $\tfrac12\|z \|\leq \|J_N z\|\leq \|z\|$.
Let $\mathscr{T}_N\subseteq \calL(\ell^\infty_N,\R)$ be as in \eqref{eq:constructTell1}. Then $\mathcal{R}(\mathscr{T}_N)\leq 1$. Moreover,
since $\R$ has cotype $2$ it follows from Theorem \ref{thm:Lmain2}, Proposition \ref{prop:easyudual} and \eqref{eq:constructTell1} that
\begin{equation}\label{eq:RbounddualcN}
\mathcal{R}(\mathscr{T}_N^*) \gtrsim \mathcal{R}^2(\mathscr{T}_N^*) = \mathcal{R}^2(\mathscr{T}_N)\gtrsim \Big(\frac{\log(N)}{\log(\log(N))}\Big)^{1/2}=:c_N,
\end{equation}
Therefore, there is a constant $K$ such that $\mathcal{R}(\mathscr{T}_N^*)\geq K c_N$.

Now let $\mathscr{S}_N = \{TJ_N^*|_{X}: T\in \mathscr{T}_N\}\subseteq \calL(X,\R)$. Then $\mathcal{R}(\mathscr{S}_N)\leq 1$. Furthermore, noting that $(J_N^*|_{X})^* = J_N$ and hence $J_N T^*\in \mathscr{S}_N^*$ for all $T\in \mathscr{T}_N$, one obtains
\[ K c_N \leq \mathcal{R}(\mathscr{T}_N^*)  \leq 2 \mathcal{R}(\mathscr{S}_N^*).\]

Therefore, $\mathcal{R}(\mathscr{S}_N^*)\geq \tfrac12 \mathcal{R}(\mathscr{T}_N^*) \geq \tfrac{K}{2} c_N$.
Now by Lemma \ref{lem:uniformbdd} we can find a family $\mathscr{S}\subseteq \calL(X,\R)$ which is $\ell^2$-bounded but not $R$-bounded.

(iii) $\Rightarrow$ (v): This follows from the proof of (i) $\Rightarrow$ (v). Indeed, for the example in (i) $\Rightarrow$ (v) one has $\mathscr{S}$ is $R$-bounded and hence $\gamma$-bounded by Proposition \ref{prop:wellknown}. Since, $\mathscr{S}^*$ is not $R$-bounded, Proposition \ref{prop:wellknown} and the finite cotype of $\R$ imply that $\mathscr{S}^*$ is also not $\gamma$-bounded.

(ii) $\Rightarrow$ (v) and (iv) $\Rightarrow$ (v): These can be proved in a similar way as (i) $\Rightarrow$ (v) and (iii) $\Rightarrow$ (v) respectively. This time use $J_N:\ell^1_N\to X$ such that $\tfrac12\|z \|\leq \|J_N z\|\leq \|z\|$ and let $\mathscr{S}_N = \{J_N T^*: T\in \mathscr{T}_N\}\subseteq \calL(\R, X)$. Then $\mathcal{R}(\mathscr{S}_N)$ is unbounded in $N$ and $\mathcal{R}(\mathscr{S}_N^*)\leq 1$. Here $\mathscr{S}_N^* = \{TJ_N^*: T\in \mathscr{T}_N\}\subseteq \calL(X^*,\R)$.

(v) $\Rightarrow$ (ii) and (v) $\Rightarrow$ (iv): If $X$ has nontrivial type, then $X^*$ has nontrivial type. Therefore, the results follow from (v) $\Rightarrow$ (i) and (v) $\Rightarrow$ (ii) applied to $X^*$.
\end{proof}

\def\polhk#1{\setbox0=\hbox{#1}{\ooalign{\hidewidth
  \lower1.5ex\hbox{`}\hidewidth\crcr\unhbox0}}} \def\cprime{$'$}

\end{document}